\renewcommand{\epsilon}{\varepsilon}
\newcommand{\dotsc}{\ldots}
\newcommand{\dotsb}{\cdots}
\newcommand{\fracb}[2]{{(#1)/#2}}
\newtheorem{theorem}{Theorem}[section]
\newtheorem{lemma}[theorem]{Lemma}
\newtheorem{proposition}[theorem]{Proposition}
\newtheorem{maintheorem}{Theorem}
\newcommand{\R}{\mathbb R}
\newcommand{\C}{\mathbb C}
\newcommand{\Z}{\mathbb Z}
\newcommand{\N}{\mathbb N}
\newcommand{\tr}{\mathrm{tr}}
\newcommand{\Hom}{\operatorname{Hom}}
\newcommand{\ms}[1]{\mathscr{#1}}
\newcommand{\mc}[1]{\mathcal{#1}}
\newcommand{\mb}[1]{\mathbb{#1}}
\newcommand{\eu}[1]{\EuScript{#1}}
\begin{document}
\begin{frontmatter}

\title{De Finetti theorems for easy quantum groups}
\runtitle{De Finetti theorems}

\begin{aug}
\author[A]{\fnms{Teodor} \snm{Banica}\thanksref{a1}\ead[label=e1]{teodor.banica@u-cergy.fr}},
\author[B]{\fnms{Stephen} \snm{Curran}\corref{}\ead[label=e2]{curransr@math.ucla.edu}}
\and
\author[C]{\fnms{Roland} \snm{Speicher}\thanksref{a2}\ead[label=e3]{speicher@math.uni-sb.de}}
\runauthor{T. Banica, S. Curran and R. Speicher}
\affiliation{Cergy-Pontoise University, UCLA and Queen's University}
\address[A]{T. Banica\\
Department of Mathematics\\
Cergy-Pontoise University\\
95000 Cergy-Pontoise\\
France\\
\printead{e1}} 
\address[B]{S. Curran\\
Department of Mathematics\\
UCLA\\
Los Angeles, California 90095\\
USA\\
\printead{e2}}
\address[C]{R. Speicher\\
Department of Mathematics\\
 \quad  and Statistics\\
Queen's University\\
Jeffery Hall\\
Kingston, Ontario K7L 3N6\\
Canada\\
and\\
Saarland University\\
FR 6.1 - Mathematik\\
Campus E 2.4, 66123 Saarbrucken\\
Germany\\
\printead{e3}}
\end{aug}
\thankstext{a1}{Supported by ANR grants ``Galoisint'' and ``Granma.''}
\thankstext{a2}{Supported by a Discovery grant from NSERC.}
\received{\smonth{4} \syear{2010}}

%
\begin{abstract}
We study sequences of noncommutative random variables which are
invariant under ``quantum transformations'' coming from an orthogonal
quantum group satisfying the ``easiness'' condition axiomatized in our
previous paper. For 10 easy quantum groups, we obtain de Finetti type
theorems characterizing the joint distribution of any infinite quantum
invariant sequence. In particular, we give a new and unified proof of
the classical results of de Finetti and Freedman for the easy groups
$S_n, O_n$, which is based on the combinatorial theory of cumulants. We
also recover the free de Finetti theorem of K\"{o}stler and Speicher,
and the characterization of operator-valued free semicircular families
due to Curran. We consider also finite sequences, and prove an
approximation result in the spirit of Diaconis and Freedman.
\end{abstract}

%
\begin{keyword}[class=AMS]
\kwd[Primary ]{46L53}
\kwd[; secondary ]{46L54}
\kwd{60G09}
\kwd{46L65}.
\end{keyword}
\begin{keyword}
\kwd{Quantum invariance}
\kwd{Gaussian distribution}
\kwd{Rayleigh distribution}
\kwd{semicircle law}.
\end{keyword}

\end{frontmatter}

\section*{Introduction}

In the study of probabilistic symmetries, the classical\break groups $S_n$
and $O_n$ play central roles. De Finetti's fundamental theorem states
that an infinite sequence of random variables whose joint distribution
is invariant under finite permutations must be conditionally
independent and identically distributed. In~\cite{fre1}, Freedman
considered sequences of real-valued random variables whose joint
distribution is invariant under orthogonal transformations, and proved
that any infinite sequence with this property must form a conditionally
independent Gaussian family with mean zero and common variance.
Although these results fail for finite sequences, approximation results
may still be obtained (see~\cite{df1,df2}). For a thorough treatment
of probabilistic symmetries, the reader is referred to the recent text
of Kallenberg~\cite{kal}.

The free analogues $S_n^+$ and $O_n^+$ of the permutation and
orthogonal groups were constructed by Wang in~\cite{wang1,wang2}.
These are compact quantum groups in the sense of Woronowicz \cite
{wor1}. In~\cite{ksp}, K\"{o}stler and Speicher discovered that de
Finetti's theorem has a natural free analogue: an infinite sequence of
noncommutative random variables has a joint distribution which is
invariant under ``quantum permutations'' coming from $S_n^+$ if and
only if the variables are freely independent and identically
distributed with amalgamation, that is, with respect to a conditional
expectation. This was further studied in~\cite{cur1}, where this
result was extended to more general sequences and an approximation
result was given for finite sequences. The free analogue of Freedman's
result was obtained in~\cite{cur2}, where it was shown that an
infinite sequence of self-adjoint noncommutative random variables has
a~joint distribution which is invariant under ``quantum orthogonal
transformations'' if and only if the variables form an operator-valued
free semicircular family with mean zero and common variance.

In this paper, we present a unified approach to de Finetti theorems by
using the ``easiness'' formalism from~\cite{bsp}. Stated roughly, a
quantum group $S_n \subset G \subset O_n^+$ is called \textit{easy} if
its tensor category is spanned by certain partitions coming from the
tensor category of $S_n$. This might look, of course, to be a quite
technical condition. However, we feel that this provides a good
framework for understanding certain probabilistic and representation
theory aspects of orthogonal quantum groups. There are 14 natural
examples of easy quantum groups, listed as follows:
\begin{longlist}[(3)]
\item[(1)] Groups: $O_n, S_n, H_n, B_n, S_n', B_n'$.
\item[(2)] Free versions: $O_n^+,S_n^+,H_n^+,B_n^+,S_n'^+,B_n'^+$.
\item[(3)] Half-liberations: $O_n^*, H_n^*$.
\end{longlist}

Except for $H_n^*$, which was found in~\cite{bcs1}, these are all
described in~\cite{bsp}. The four ``primed'' versions above are rather
trivial modifications of their ``unprimed'' versions, corresponding to
taking a product with a copy of $\Z_2$. We will focus then on the
remaining ten examples in this paper, from which similar results for the
``primed'' versions may be easily deduced.

As explained in~\cite{bsp,bcs1}, our motivating belief is that ``any
result which holds for $S_n, O_n$ should have a suitable extension to
all easy quantum groups.'' This is, of course, a quite vague statement,
whose target is formed by several results at the borderline of
representation theory and probability. This paper represents the first
application of this philosophy.

If $G$ is an easy quantum group, there is a natural notion of
$G$-invariance for a sequence of noncommutative random variables, which
agrees with the usual definition when $G$ is a classical group. Our
main result is the following de Finetti type theorem, which
characterizes the joint distributions of infinite $G$-invariant
sequences for the 10 natural easy quantum groups discussed above.
\begin{maintheorem}\label{definetti}
Let $(x_i)_{i \in\N}$ be a sequence of self-adjoint random variables
in a W$^*$-probability space $(M,\varphi)$, and suppose that the\vadjust{\goodbreak}
sequence is $G$-invariant, where $G$ is one of
$O,S,H,B,O^*,H^*,O^+,S^+,H^+,B^+$. Assume that $M$ is generated as a
von Neumann algebra by $\{x_i\dvtx  i \in\N\}$. Then there is a
W$^*$-subalgebra $1 \subset\mc B \subset M$ and a $\varphi
$-preserving conditional expectation $E\dvtx M \to\mc B$ such that the
following hold:
\begin{enumerate}[(3)]
\item[(1)] Free case:
\begin{longlist}[(b)]
\item[(a)] If $G = S^+$, then $(x_i)_{i \in\N}$ are freely independent and
identically distributed with amalgamation over $\mc B$.
\item[(b)] If $G = H^+$, then $(x_i)_{i \in\N}$ are freely independent,
and have even and identical distributions, with amalgamation over $\mc B$.
\item[(c)] If $G = O^+$, then $(x_i)_{i \in\N}$ form a $\mc B$-valued free
semicircular family with mean zero and common variance.
\item[(d)] If $G = B^+$, then $(x_i)_{i \in\N}$ form a $\mc B$-valued free
semicircular family with common mean and variance.
\end{longlist}
\item[(2)] Half-liberated case: Suppose that $x_ix_jx_k = x_kx_jx_i$ for any
$i,j,k \in\N$.
\begin{longlist}[(b)]
\item[(a)] If $G = H^*$, then $(x_i)_{i \in\N}$ are conditionally
half-independent and identically distributed given $\mc B$.
\item[(b)] If $G = O^*$, then $(x_i)_{i \in\N}$ are conditionally
half-independent, and have symmetrized Rayleigh distributions with
common variance, given~$\mc B$.
\end{longlist}
\item[(3)] Classical case: Suppose that $(x_i)_{i \in\N}$ commute.
\begin{longlist}[(b)]
\item[(a)] If $G = S$, then $(x_i)_{i \in\N}$ are conditionally
independent and identically distributed given $\mc B$.
\item[(b)] If $G = H$, then $(x_i)_{i \in\N}$ are conditionally
independent, and have even and identical distributions, given $\mc B$.
\item[(c)] If $G = O$, then $(x_i)_{i \in\N}$ are conditionally
independent, and have Gaussian distributions with mean zero and common
variance, given $\mc B$.
\item[(d)] If $G = B$, then $(x_i)_{i \in\N}$ are conditionally
independent, and have Gaussian distributions with common mean and
variance, given $\mc B$.
\end{longlist}
\end{enumerate}
\end{maintheorem}

The notion of half-independence, appearing in (2) above, will be
introduced in Section~\ref{sec:halfindependence}. The basic example of
a half-independent family of noncommutative random variables is
$(x_i)_{i \in I}$,
\[
x_i =
\pmatrix{\displaystyle 0 & \xi_i\cr\displaystyle  \overline{\xi_i} & 0
}
,
\]
where $(\xi_i)_{i \in I}$ are independent, complex-valued random
variables and\break $\mb E[\xi_i^n\overline{\xi_i}^m] = 0$ unless $n = m$
(see Example~\ref{halfexample} and Proposition \ref
{halfindependentform}). Note that in particular, if $(\xi_i)_{i \in\N
}$ are independent and identically distributed complex Gaussian random
variables, then $x_i$ has a symmetrized Rayleigh distribution $(\xi
_i\overline{\xi_i})^{1/2}$ and we obtain the joint distribution in
(2) corresponding to the half-liberated\vadjust{\goodbreak} orthogonal group $O^*$. Since
the complex Gaussian distribution is known to be characterized by
unitary invariance, this appears to be closely related to the
connection between $U_n$ and $O_n^*$ observed in~\cite{bv,bcs1}.

Let us briefly outline the proof of Theorem~\ref{definetti}, to be
presented in Section~\ref{sec:definetti}. We define von Neumann
subalgebras $\mc B_n \subset M$ consisting of ``functions'' of the
variables $(x_i)_{i \in\N}$ which are invariant under ``quantum
transformations'' of $x_1,\dotsc,x_n$ coming from the quantum group
$G_n$. The \textit{$G$-invariant} subalgebra~$\mc B$ is defined as the
intersection of the nested sequence $\mc B_n$ (note that if $G = S$,
then $\mc B$ corresponds to the classical \textit{exchangeable
subalgebra}). There are natural conditional expectations onto $\mc B_n$
given by ``averaging'' over $G_n$, that is, integrating with respect to
the \textit{Haar state} on the compact quantum group $G_n$. By using
an explicit formula for the Haar states on easy quantum groups from
\cite{bsp}, and a noncommutative reversed martingale convergence
argument, we obtain a simple combinatorial formula for computing joint
moments with respect to the conditional expectation onto the
$G$-invariant subalgebra. What emerges from these computations is a
\textit{moment-cumulant formula}, and Theorem~\ref{definetti} follows
from the characterizations of these joint distributions by the
structure of their cumulants. Note that, in particular, we obtain a new
proof of de Finetti's classical result for $S_n$ which is based on
cumulants. This method also allows us to give certain approximation
results for finite sequences, which will be explained in Section \ref
{sec:finite}.

The paper is organized as follows. Section~\ref{sec:background}
contains preliminaries. Here we collect the basic notions from the
combinatorial theory of classical and free probability. We also recall
some basic notions and results from~\cite{bsp} about the class of
``easy'' quantum groups. In Section~\ref{sec:halfindependence}, we
introduce half-independence and develop its basic combinatorial theory.
In Section~\ref{sec:weingarten}, we recall the Weingarten formula from
\cite{bsp} for computing integrals on easy quantum groups, and give a
new estimate on the asymptotic behavior of these integrals. This will
be essential to the proofs of our main results, and we believe that
this estimate will also find applications to other problems involving
easy quantum groups. In Section~\ref{sec:finite}, we define quantum
invariance for finite sequences, prove a converse to Theorem \ref
{definetti}, and give approximate de Finetti type results. Section \ref
{sec:definetti} contains the proof of Theorem~\ref{definetti}, and
a~discussion of the situation for unbounded random variables in the
classical and half-liberated cases. Section~\ref{sec:conclusion}
contains concluding remarks.

\section{Background and notation}\label{sec:background}

\subsection*{Noncommutative probability} We begin by recalling
the basic notions of noncommutative probability spaces and
distributions of random variables. For further details, see the texts
\cite{vdn,ns}.
\begin{definition}
{\baselineskip=0pt\begin{longlist}[(2)]
\item[(1)] A \textit{noncommutative probability space} is a pair $(\mc
A,\varphi)$, where $\mc A$ is a~unital algebra over $\C$, and
$\varphi\dvtx \mc A \to\C$\vadjust{\goodbreak} is a linear functional such that $\varphi(1)
= 1$. Elements in a noncommutative probability space $(\mc A,\varphi)$
will be called \textit{noncommutative random variables}, or simply
\textit{random variables}.
\item[(2)] A W$^*$-probability space $(M,\varphi)$ is a von Neumann
algebra $M$ together with a faithful normal state $\varphi$. We will
not assume that $\varphi$ is a trace.
\end{longlist}}
\end{definition}

\begin{example}
Let $(\Omega,\Sigma,\mu)$ be a (classical) probability space.
\begin{longlist}[(2)]
\item[(1)] The pair $(L^\infty(\mu),\mb E)$ is a W$^*$-probability space,
where $L^\infty(\mu)$ is the algebra of bounded $\Sigma$-measurable
random variables, and $\mb E$ is the expectation functional $\mb E(f) =
\int f \,d\mu$.
\item[(2)] Let
\[
L(\mu) = \bigcap_{1 \leq p < \infty} L^p(\mu)
\]
be the algebra of random variables with finite moments of all orders.
Then $(L(\mu),\mb E)$ is a noncommutative probability space.
\end{longlist}
\end{example}

The joint distribution of a sequence $(X_1,\dotsc,X_n)$ of (classical)
random variables can be defined as the linear functional on $C_b(\R
^n)$ determined by
\[
f \mapsto\mb E[f(X_1,\dotsc,X_n)].
\]
In the noncommutative context, it is generally not possible to make
sense of $f(x_1,\dotsc,x_n)$ for $f \in C_b(\R^n)$ if the random
variables $x_1,\dotsc,x_n$ do not commute. Instead, we work with an
algebra of noncommutative polynomials.

\begin{notation}
Let $I$ be a nonempty set. We let $\ms P_I$ denote the algebra $\C
\langle t_i\dvtx  i \in I \rangle$ of noncommutative polynomials, with
generators indexed by the set~$I$. Note that $\ms P_I$ is spanned by
$1$ and monomials of the form $t_{i_1}\dotsb t_{i_k}$, for $k \in\N$
and $i_1,\dotsc,i_k \in I$. If $I = \{1,\dotsc,n\}$, we set $\ms P_n
= \ms P_I$, and if $I = \N$ we denote $\ms P_\infty= \ms P_I$.
\end{notation}

Given a family $(x_i)_{i \in I}$ of noncommutative random variables in
a noncommutative probability space $(\mc A,\varphi)$, there is a
unique unital homomorphism $\mathrm{ev}_x\dvtx \ms P_I \to\mc A$ which
sends $t_i$ to $x_i$ for each $i \in I$. We also denote this map by $p
\mapsto p(x)$.

\begin{definition}
Let $(x_i)_{i \in I}$ be a family of random variables in the
noncommutative probability space $(\mc A,\varphi)$. The \textit{joint
distribution} of $(x_i)_{i \in I}$ is the linear functional $\varphi
_x\dvtx \ms P_I \to\C$ defined by
\[
\varphi_x(p) = \varphi(p(x)).
\]
\end{definition}

Note that the joint distribution of $(x_i)_{i \in I}$ is determined by
the collection of \textit{joint moments}
\[
\varphi_x(t_{i_1}\dotsb t_{i_k}) = \varphi(x_{i_1}\dotsb x_{i_k})
\]
for $k \in\N$ and $i_1,\dotsc,i_k \in I$.\vadjust{\goodbreak}

\begin{remark}
In the classical de Finetti's theorem, the independence which occurs is
only after conditioning. Likewise the free de Finetti's theorem is a
statement about freeness with amalgamation. Both of these concepts may
be expressed in terms of operator-valued probability spaces, which we
now recall.
\end{remark}

\begin{definition}
An \textit{operator-valued probability} space $(\mc A, E\dvtx \mc A \to\mc
B)$ consists of a unital algebra $\mc A$, a subalgebra $1 \in\mc B
\subset\mc A$, and a conditional expectation $E\dvtx \mc A \to\mc B$, that
is, $E$ is a linear map such that $E[1] = 1$ and
\[
E[b_1ab_2] = b_1E[a]b_2
\]
for all $b_1,b_2 \in\mc B$ and $a \in\mc A$.
\end{definition}

\begin{example}
Let $(\Omega,\Sigma,\mu)$ be a probability space, and let $\mc F
\subset\Sigma$ be a $\sigma$-subalgebra. Let $\mc A = L^\infty(\mu
)$, and let $\mc B = L^\infty(\mu|_{\mc F})$ be the subalgebra of
bounded, $\mc F$-measurable functions on $\Omega$. Then $(\mc A, \mb
E[\cdot|\mc F])$ is an operator-valued probability space.
\end{example}

To define the joint distribution of a family $(x_i)_{i \in I}$ in an
operator-valued probability space $(\mc A,E\dvtx \mc A \to\mc B)$, we will
use the algebra $\mc B\langle t_i\dvtx  i \in I \rangle$ of noncommutative
polynomials with coefficients in $\mc B$. This algebra is spanned by
monomials of the form $b_0t_{i_1}\dotsb t_{i_k}b_k$, for $k \in\N$,
$b_0,\dotsc,b_k \in\mc B$ and \mbox{$i_1,\dotsc,i_k \in I$}. There is a
unique homomorphism from $\mc B \langle t_i\dvtx i \in I \rangle$ into $\mc
A$ which acts as the identity on $\mc B$ and sends $t_i$ to $x_i$,
which we denote by $p \mapsto p(x)$.

\begin{definition}
Let $(\mc A,E\dvtx \mc A \to\mc B)$ be an operator-valued probability
space, and let $(x_i)_{i \in I}$ be a family in $\mc A$. The \textit
{$B$-valued joint distribution} of the family $(x_i)_{i \in I}$ is the
linear map $E_x\dvtx \mc B \langle t_i\dvtx  i \in I \rangle\to\mc B$ defined by
\[
E_x(p) = E[p(x)].
\]
\end{definition}

Observe that the joint distribution is determined by the \textit{$\mc
B$-valued joint moments}
\[
E_x[b_0t_{i_1}\dotsb t_{i_k}b_k] = E[b_0x_{i_1}\dotsb x_{i_k}b_k]
\]
for $b_0,\dotsc,b_k \in\mc B$ and $i_1,\dotsc,i_k \in I$. Observe
that if $\mc B$ commutes with the variables $(x_i)_{i \in I}$, then
\[
E[b_0x_{i_1}\dotsb x_{i_k}b_k] = b_0\dotsb b_k E[x_{i_1}\dotsb x_{i_k}],
\]
so that the $\mc B$-valued joint distribution is determined simply by
the collection of moments $E[x_{i_1}\dotsb x_{i_k}]$ for $i_1,\dotsc
,i_k \in I$.

\begin{definition}Let $(x_i)_{i \in I}$ be a family in the
operator-valued probability space $(\mc A,E\dvtx \mc A \to\mc B)$.
\begin{longlist}[(2)]
\item[(1)] If the algebra generated by $\mc B$ and $\{x_i\dvtx i \in I\}$ is
commutative, then the variables are called \textit{conditionally
independent given $B$} if
\[
E[p_1(x_{i_1})\dotsb p_k(x_{i_k})] = E[p_1(x_{i_1})]\dotsb
E[p_k(x_{i_k})],
\]
whenever $i_1,\dotsc,i_k$ are distinct and $p_1,\dotsc,p_k$ are
polynomials in $\mc B \langle t \rangle$.

\item[(2)] The variables $(x_i)_{i \in I}$ are called \textit{free with
amalgamation over $\mc B$}, or \textit{free with respect to $E$}, if
\[
E[p_1(x_{i_1})\dotsb p_k(x_{i_k})] = 0,
\]
whenever $i_1,\dotsc,i_k \in I$ are such that $i_l \neq i_{l+1}$ for
$1 \leq l < k$, and $p_1,\dotsc,p_k \in\mc B \langle t \rangle$ are
such that $E[p_l(x_{i_l})] = 0$ for $1 \leq l \leq k$.
\end{longlist}
\end{definition}

\begin{remark}
Voiculescu first defined freeness with amalgamation, and developed its
basic theory in~\cite{voi}. Conditional independence and freeness with
amalgamation also have rich combinatorial theories, which we now
recall. In the free case this is due to Speicher~\cite{sp2}; see also
~\cite{ns}.
\end{remark}

\begin{definition}
{\baselineskip=0pt\begin{longlist}[(5)]
\item[(1)] A \textit{partition} $\pi$ of a set $S$ is a collection of
disjoint, nonempty sets $V_1,\dotsc,V_r$ such that $V_1 \cup\dotsb
\cup V_r = S$. $V_1,\dotsc,V_r$ are called the \textit{blocks} of
$\pi$, and we set $|\pi| = r$. The collection of partitions of $S$
will be denoted $P(S)$, or in the case that $S =\{1,\dotsc,k\}$ by $P(k)$.
\item[(2)] Given $\pi,\sigma\in P(S)$, we say that $\pi\leq\sigma$ if
each block of $\pi$ is contained in a block of $\sigma$. There is a
least element of $P(S)$ which is larger than both~$\pi$ and $\sigma$,
which we denote by $\pi\vee\sigma$.
\item[(3)] If $S$ is ordered, we say that $\pi\in P(S)$ is \textit
{noncrossing} if whenever $V,W$ are blocks of $\pi$ and $s_1 < t_1 <
s_2 < t_2$ are such that $s_1,s_2 \in V$ and $t_1,t_2 \in W$, then $V =
W$. The set of noncrossing partitions of $S$ is denoted by $\mathit{NC}(S)$, or
by $\mathit{NC}(k)$ in the case that $S = \{1,\dotsc,k\}$.

\item[(4)] The noncrossing partitions can also be defined recursively, a
partition $\pi\in P(S)$ is noncrossing if and only if it has a block
$V$ which is an interval, such that $\pi\setminus V$ is a noncrossing
partition of $S \setminus V$.
\item[(5)] Given $i_1,\dotsc,i_k$ in some index set $I$, we denote by
$\ker\mathbf i$ the element of $P(k)$ whose blocks are the equivalence
classes of the relation
\[
s \sim t  \quad \Leftrightarrow \quad  i_s= i_t.
\]
Note that if $\pi\in P(k)$, then $\pi\leq\ker\mathbf i$ is
equivalent to the condition that whenever $s$ and $t$ are in the same
block of $\pi$, $i_s$ must equal $i_t$.
\end{longlist}}
\end{definition}

\begin{definition}
Let $(\mc A, E\dvtx \mc A \to\mc B)$ be an operator-valued probability space.
\begin{longlist}[(2)]
\item[(1)] A \textit{$\mc B$-functional} is a $n$-linear map $\rho\dvtx \mc A^n
\to\mc B$ such that
\[
\rho(b_0a_1b_1,a_2b_2,\dotsc,a_nb_n) = b_0\rho(a_1,b_1a_2,\dotsc
,b_{n-1}a_n)b_n
\]
for all $b_0,\dotsc,b_n \in\mc B$ and $a_1,\dotsc,a_n$.
Equivalently, $\rho$ is a linear map from $\mc A^{\otimes_{\mc B} n}$
to $\mc B$, where the tensor product is taken with respect to the
natural $\mc B - \mc B$-bimodule structure on $\mc A$.
\item[(2)] Suppose that $\mc B$ is commutative. For $k \in\N$ let $\rho
^{(k)}$ be a $\mc B$-functional. Given $\pi\in P(n)$, we define a $\mc
B$-functional $\rho^{(\pi)}\dvtx \mc A^n \to\mc B$ by the formula
\[
\rho^{(\pi)}[a_1,\dotsc,a_n] = \prod_{V \in\pi} \rho
(V)[a_1,\dotsc,a_n],
\]
where if $V = (i_1 < \dotsb< i_s)$ is a block of $\pi$ then
\[
\rho(V)[a_1,\dotsc,a_n] = \rho_s(a_{i_1},\dotsc,a_{i_s}).
\]
\end{longlist}
\end{definition}

If $\mc B$ is noncommutative, there is no natural order in which to
compute the product appearing in the above formula for $\rho^{(\pi
)}$. However, the nesting property of noncrossing partitions allows for
a natural definition of $\rho^{(\pi)}$ for $\pi\in \mathit{NC}(n)$, which we
now recall from~\cite{sp2}.

\begin{definition}
Let $(\mc A,E\dvtx \mc A \to\mc B)$ be an operator-valued probability
space, and for $k \in\N$ let $\rho^{(k)}\dvtx \mc A^k \to\mc B$ be a
$\mc B$-functional. Given $\pi\in \mathit{NC}(n)$, define a $\mc B$-functional
$\rho^{(n)}\dvtx \mc A^n \to\mc B$ recursively as follows:
\begin{longlist}[(2)]
\item[(1)] If $\pi= 1_n$ is the partition containing only one block, define
$\rho^{(\pi)} = \rho^{(n)}$.
\item[(2)] Otherwise, let $V = \{l+1,\dotsc,l+s\}$ be an interval of $\pi$
and define
\[
\rho^{(\pi)}[a_1,\dotsc,a_n] = \rho^{(\pi\setminus V)}\bigl[a_1,\dotsc
,a_l\cdot\rho^{(s)}(a_{l+1},\dotsc,a_{l+s}),a_{l+s+1},\dotsc,a_n\bigr]
\]
for $a_1,\dotsc,a_n \in\mc A$.
\end{longlist}
\end{definition}

\begin{example}
Let
\begin{eqnarray*}
\pi= \{\{1,8,9,10\},\{2,7\},\{3,4,5\}, \{6\}\} \in \mathit{NC}(10),\\[-12pt]
\end{eqnarray*}

\includegraphics{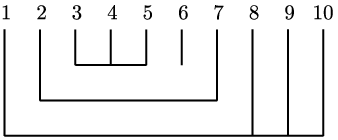}

\vspace*{6pt}
\noindent then the corresponding $\rho^{(\pi)}$ is given by
\[
\rho^{(\pi)}[a_1,\dotsc,a_{10}] = \rho^{(4)}\bigl(a_1\cdot\rho
^{(2)}\bigl(a_2\cdot\rho^{(3)}(a_3,a_4,a_5),\rho^{(1)}(a_6)\cdot
a_7\bigr),a_8,a_9,a_{10}\bigr).
\]
\end{example}

\begin{definition}
Let $(\mc A, E\dvtx \mc A \to\mc B)$ be an operator-valued probability
space, and let $(x_i)_{i \in I}$ be a family of random variables in
$\mc A$.
\begin{longlist}[(2)]
\item[(1)] The \textit{operator-valued classical cumulants} $c_E^{(k)}\dvtx \mc
A^k \to\mc B$ are the $\mc B$-func\-tionals defined by the \textit
{classical moment-cumulant formula}
\[
E[a_1 \dotsb a_n] = \sum_{\pi\in P(n)} c_E^{(\pi)}[a_1,\dotsc,a_n].\vadjust{\goodbreak}
\]
Note that the right-hand side of the equation is equal to
$c_E^{(n)}[a_1,\dotsc,a_n]$ plus lower order terms, and hence
$c_E^{(n)}$ can be solved for recursively.
\item[(2)] The \textit{operator-valued free cumulants} $\kappa_E^{(k)}\dvtx \mc
A^k \to\mc B$ are the $\mc B$-functionals defined by the \textit{free
moment-cumulant formula}
\[
E[a_1,\dotsc,a_n] = \sum_{\pi\in \mathit{NC}(n)} \kappa_E^{(\pi
)}[a_1,\dotsc,a_n].
\]
As above, this equation can be solved recursively for $\kappa_E^{(n)}$.
\end{longlist}
\end{definition}

While the definitions of conditional independence and freeness with
amalgamation given above appear at first to be quite different, they
have very similar expressions in terms of cumulants. In the free case,
the following theorem is due to Speicher~\cite{sp2}.

\begin{theorem}
Let $(\mc A, E\dvtx \mc A \to\mc B)$ be an operator-valued probability
space, and $(x_i)_{i \in I}$ a family of random variables in $\mc A$.
\begin{longlist}[(2)]
\item[(1)] If the algebra generated by $\mc B$ and $(x_i)_{i \in I}$ is
commutative, then the variables are conditionally independent given $B$
if and only if
\[
c_E^{(n)}[b_0x_{i_1}b_1,\dotsc,x_{i_n}b_n] = 0,
\]
whenever there are $1 \leq k,l \leq n$ such that $i_k \neq i_l$.
\item[(2)] The variables are free with amalgamation over $\mc B$ if and only if
\[
\kappa_E^{(n)}[b_0x_{i_1}b_1,\dotsc,x_{i_n}b_n] = 0,
\]
whenever there are $1 \leq k,l \leq n$ such that $i_k \neq i_l$.
\end{longlist}
\end{theorem}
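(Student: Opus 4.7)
My plan is to prove both cases in parallel, since they share the same formal structure: the moment-cumulant formula relates moments to cumulants via summation over the appropriate partition lattice ($P(n)$ in (1), $NC(n)$ in (2)), and the vanishing-of-mixed-cumulants condition should be equivalent to restricting these sums to partitions $\pi \leq \ker \mathbf i$. Möbius inversion on the relevant lattice is the unifying tool in both directions.

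For the direction ``vanishing cumulants $\Rightarrow$ independence/freeness'', I would first use multilinearity of $c_E^{(n)}$ (resp.\ $\kappa_E^{(n)}$) in each argument to extend the vanishing hypothesis from arguments of the form $b_{j-1} x_{i_j} b_j$ to arbitrary $p_j(x_{i_j}) \in \mc B \langle t \rangle$. Substituted into
\begin{equation*}
 E[p_1(x_{i_1}) \cdots p_k(x_{i_k})] = \sum_{\pi} \rho^{(\pi)}[p_1(x_{i_1}), \ldots, p_k(x_{i_k})],
\end{equation*}
only $\pi \leq \ker \mathbf i$ survive. In case (1), with the $i_j$ distinct, commutativity and the block-product structure of $c_E^{(\pi)}$ factor the right-hand side as $\prod_j E[p_j(x_{i_j})]$, which is conditional independence. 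In case (2), suppose the $i_j$ alternate and each $E[p_l(x_{i_l})] = 0$. Any $\pi \in NC(k)$ with $\pi \leq \ker \mathbf i$ must have an interval block contained strictly inside a single position $l$ (noncrossingness together with the alternation $i_l \neq i_{l+1}$ forces this). Unraveling the recursive definition of $\kappa_E^{(\pi)}$ around such an interval block produces an inner factor whose expectation is $E[p_l(x_{i_l})] = 0$, so every $\rho^{(\pi)}$ vanishes.

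For the converse ``independence/freeness $\Rightarrow$ vanishing cumulants'', I would induct on $n$. Fix arguments $y_j = b_{j-1} x_{i_j} b_j$ with $\ker \mathbf i \neq 1_n$. The factorization property of conditional independence (resp.\ the alternating-centered vanishing that characterizes freeness) gives one expression for $E[y_1 \cdots y_n]$ supported on partitions respecting the block structure of $\ker \mathbf i$. The moment-cumulant formula provides a second expression as a sum over all $\pi \in P(n)$ (resp.\ $NC(n)$). The inductive hypothesis eliminates every lower cumulant whose blocks are not all inside $\ker \mathbf i$, so equating the two expressions and isolating the unique remaining top-degree term via Möbius inversion on the interval $[0_n, 1_n]$ restricted to partitions $\not\leq \ker\mathbf i$ forces $c_E^{(n)}[y_1, \ldots, y_n] = 0$ (resp.\ $\kappa_E^{(n)}[y_1, \ldots, y_n] = 0$).

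The main technical obstacle is the noncommutativity of $\mc B$ in the free case: $\kappa_E^{(\pi)}$ is defined only for $\pi \in NC(n)$ via a nested recursion whose precise form depends on the order in which interval blocks are peeled off and on the $\mc B$-bimodule placement of the $b_j$'s. One cannot freely permute factors, so the interval-block arguments in both directions have to be executed with careful bookkeeping of the $\mc B$-insertions and of the nesting order; this is essentially the content of Speicher's original treatment in \cite{sp2}, which I would invoke directly for the free case. The classical case (1) sidesteps this difficulty since commutativity of $\mc B$ and of the generated algebra makes $\rho^{(\pi)}$ an honest product over blocks.
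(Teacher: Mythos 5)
The paper does not itself prove this theorem; it is stated as a known result, attributed to Speicher \cite{sp2} in the free case, so there is no internal argument to compare against. Your sketch does have the right combinatorial shape (moment--cumulant expansion, M\"obius inversion, interval-block recursion), but there is a genuine gap at the first step of the forward direction. You claim that multilinearity of $c_E^{(n)}$ (resp.\ $\kappa_E^{(n)}$) lets you pass from the hypothesis about arguments $b_{j-1}x_{i_j}b_j$ to the vanishing of mixed cumulants with arbitrary polynomial arguments $p_j(x_{i_j}) \in \mc B\langle t\rangle$. It does not: the $\mc B$-functional property moves $\mc B$-coefficients in and out, but a degree-$\geq 2$ monomial such as $b_0 x_{i_j} b_1 x_{i_j} b_2$ is not a $\mc B$-linear combination of single-$x_{i_j}$ arguments. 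Establishing that mixed cumulants with \emph{products} as entries vanish is the substantive content of the theorem; it requires the ``cumulants with products as arguments'' formula (Leonov--Shiryaev classically; Theorem 11.12 of \cite{ns}, going back to \cite{sp2}, in the operator-valued free setting), which rewrites $\kappa_E^{(n)}[a_1\dotsb a_{k_1}, a_{k_1+1}\dotsb a_{k_2},\dotsc]$ as a sum over partitions in a finer lattice subject to a joining condition, after which one checks that every surviving partition still has a mixed block.

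You do eventually say you would invoke \cite{sp2} for the free case, which is the correct move, but you misattribute the obstacle to the noncommutativity of $\mc B$. The products-as-arguments issue is already nontrivial when $\mc B = \C$, and it arises equally in case (1): noting that $c_E^{(\pi)}$ is an honest product over blocks does not help when a block receives two arguments $p_r(x_{i_r}), p_s(x_{i_s})$ of degree $\geq 2$ with $i_r \neq i_s$. In the classical commutative case there is a clean workaround --- expand each $p_j$ into $\mc B$-monomials and apply the moment--cumulant formula at the level of the fully refined word $x_{j_1}\dotsb x_{j_N}$, so that every cumulant argument carries exactly one $x$ and the hypothesis applies directly, after which the sum factors over the blocks of $\ker\mathbf j$ --- but that is a different argument than multilinearity and should be presented as such.
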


Note that the condition in (1) is equivalent to the statement that if
$\pi\in P(n)$, then
\[
c_E^{(\pi)}[b_0x_{i_1}b_1,\dotsc,x_{i_n}b_n] = 0
\]
unless $\pi\leq\ker\mathbf i$, and likewise in (2) for $\pi\in
\mathit{NC}(n)$. Stronger characterizations of the joint distribution of
$(x_i)_{i \in I}$ can be given by specifying what types of partitions
may contribute nonzero cumulants.

\begin{theorem}\label{cumulantcharacterization}
Let $(\mc A,E\dvtx \mc A \to\mc B)$ be an operator-valued probability
space, and let $(x_i)_{i \in I}$ be a family of random variables in
$\mc A$.
\begin{longlist}[(2)]
\item[(1)] Suppose that $\mc B$ and $(x_i)_{i \in I}$ generate a commutative
algebra. The $\mc B$-valued joint distribution of $(x_i)_{i \in I}$ has
the property corresponding to $D$ in the table below if and only if for
any $\pi\in P(n)$
\[
c_E^{(\pi)}[b_0x_{i_1}b_1,\dotsc,x_{i_n}b_n] = 0
\]
unless $\pi\in D(n)$ and $\pi\leq\ker\mathbf i$.

\begin{center}
{\fontsize{9pt}{11pt}\selectfont{
\begin{tabular*}{\textwidth}{@{\extracolsep{\fill}}lc@{}}
\hline
\textbf{Partitions} $\bolds D$ & \textbf{Joint distribution}\\
\hline
$P$: All partitions & Independent\\
$P_h$: Partitions with even block sizes & Independent and even\\
$P_b$: Partitions with block size $\leq2$ & Independent Gaussian\\
$P_2$: Pair partitions & Independent centered Gaussian\\
\hline
\end{tabular*}}}
\end{center}\vspace*{9pt}
\item[(2)] The $\mc B$-valued joint distribution of $(x_i)_{i \in I}$ has
the property corresponding to $D$ in the the table below if and only if
for any $\pi\in \mathit{NC}(n)$
\[
\kappa_E^{(\pi)}[b_0x_{i_1}b_1,\dotsc,x_{i_n}b_n] = 0
\]
unless $\pi\in D(n)$ and $\pi\leq\ker\mathbf i$.

\vspace*{9pt}\begin{center}
{\fontsize{9pt}{11pt}\selectfont{
\begin{tabular*}{\textwidth}{@{\extracolsep{\fill}}lc@{}}
\hline
\textbf{Noncrossing partitions} $\bolds D$ & \textbf{Joint distribution}\\
\hline
$\mathit{NC}$: Noncrossing partitions & Freely independent\\
$\mathit{NC}_h$: Noncrossing partitions with even block sizes & Freely
independent and even\\
$\mathit{NC}_b$: Noncrossing partitions with block size $\leq2$ & Freely
independent semicircular\\
$\mathit{NC}_2$: Noncrossing pair partitions & Freely independent centered
semicircular\\
\hline
\end{tabular*}}}
\end{center}\vspace*{5pt}
\end{longlist}
\end{theorem}

\begin{pf}
These results are well known. In the classical case, note that the
results for $P_2,P_b$ are equivalent to the \textit{Wick formula} for
computing moments of independent Gaussian families. In the free case,
see~\cite{sp2,ns}.
\end{pf}

\begin{remark}
It is clear from the definitions that the classical and free cumulants
can be solved for from the joint moments. In fact, a combinatorial
formula for the cumulants in terms of the moments can be given. First
we recall the definition of the M\"{o}bius function on a partially
ordered set.
\end{remark}

\begin{definition}
Let $(P,<)$ be a finite partially ordered set. The \textit{M\"{o}bius
function} $\mu_P\dvtx P \times P \to\Z$ is defined by
\[
\mu_P(p,q) =
\cases{\displaystyle 0,  \qquad  p \not\leq q,\cr\displaystyle
1,  \qquad p = q,\vspace*{4pt}\cr\displaystyle
-1 + \sum_{l \geq1} (-1)^{l+1} \#\{(p_1,\dotsc,p_l) \in P^l\dvtx  p < p_1
< \dotsb< p_l < q\},   \vspace*{3pt}\cr
 \qquad p < q.
}
\]
\end{definition}

\begin{theorem}
Let $(\mc A, E\dvtx \mc A \to\mc B)$ be an operator-valued probability
space, and let $(x_i)_{i \in I}$ be a family of random variables.
Define the \textit{$\mc B$-valued moment functionals} $E^{(n)}$ by
\[
E^{(n)}[a_1,\dotsc,a_n] = E[a_1\dotsb a_n].
\]
\begin{longlist}[(2)]
\item[(1)] Suppose that $\mc B$ is commutative. Then for any $\sigma\in
P(n)$ and $a_1,\dotsc,\allowbreak a_n \in\mc A$, we have
\[
c_E^{(\sigma)}[a_1,\dotsc,a_n] = \mathop{\mathop{\sum}_{\pi\in P(n)}}_{ \pi
\leq\sigma} \mu_{P(n)}(\pi,\sigma) E^{(\pi)}[a_1,\dotsc,a_n].
\]
\item[(2)] For any $\sigma\in \mathit{NC}(n)$ and $a_1,\dotsc,a_n \in\mc A$, we have
\[
\kappa_E^{(\sigma)}[a_1,\dotsc,a_n] = \mathop{\mathop{\sum}_{\pi\in
\mathit{NC}(n)}}_{ \pi\leq\sigma} \mu_{\mathit{NC}(n)}(\pi,\sigma) E^{(\pi
)}[a_1,\dotsc,a_n].
\]
\end{longlist}
\end{theorem}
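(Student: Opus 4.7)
The plan is to derive each formula by Möbius inversion on the appropriate lattice, once the moment-cumulant relation has been upgraded from its form at the maximal partition $1_n$ to a version indexed by an arbitrary $\sigma$.

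First I would establish the multiplicative version of the moment-cumulant formula:
\begin{equation*}
E^{(\sigma)}[a_1,\dotsc,a_n] = \sum_{\substack{\pi \leq \sigma\\ \pi \in P(n) \text{ resp.\ } NC(n)}} c_E^{(\pi)}[a_1,\dotsc,a_n] \quad \text{resp.} \quad \kappa_E^{(\pi)}[a_1,\dotsc,a_n].
\end{equation*}
In the classical case this is immediate: $\mc B$ is commutative, and both $E^{(\sigma)}$ and $c_E^{(\pi)}$ are built multiplicatively over their blocks, so writing $E[a_V] = \sum_{\pi_V \in P(V)} c_E^{(\pi_V)}[a_V]$ block by block and distributing the product over $\sigma$ produces precisely the sum over $\pi \le \sigma$. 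In the free case, one instead exploits the recursive interval-collapsing definition of $\rho^{(\sigma)}$: pick an interval block $V$ of $\sigma$, apply the (global) moment-cumulant formula to the moment $E(a_V)$ coming from that block, and note that substituting a sum of $\kappa_E^{(\pi_V)}[a_V]$ into the next level of recursion produces exactly the nestings $\pi \le \sigma$ in $NC(n)$. This is the standard Speicher argument.

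Once the partitioned moment-cumulant identity is in hand, both statements are pure poset combinatorics. The family $\{E^{(\sigma)}\}_{\sigma}$ and $\{c_E^{(\sigma)}\}_\sigma$ (respectively $\{\kappa_E^{(\sigma)}\}_\sigma$) are related by $f(\sigma) = \sum_{\pi \le \sigma} g(\pi)$ in the incidence algebra of $P(n)$ (resp.\ $NC(n)$), viewed as $\mc B$-valued functions of $a_1,\dotsc,a_n$. By the standard Möbius inversion formula on a finite poset, this is equivalent to
\begin{equation*}
g(\sigma) = \sum_{\pi \le \sigma} \mu(\pi,\sigma) f(\pi),
\end{equation*}
which is precisely the claimed expressions for $c_E^{(\sigma)}$ and $\kappa_E^{(\sigma)}$.

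The only nontrivial step is the free case of the multiplicative identity, because $\rho^{(\sigma)}$ is not a naive product of block contributions when $\mc B$ fails to be commutative; one must verify that the interval-recursion really does interact with moment-cumulant substitution in the way just described. However, this is exactly the content of the existence and well-definedness of the recursive $\kappa_E^{(\pi)}$ already invoked in the excerpt, and a short induction on $n$ finishes it. Everything else is formal Möbius inversion applied to linear functionals of the fixed tuple $(a_1,\dotsc,a_n)$, so no subtleties arise from the $\mc B$-bimodule structure of the functionals.
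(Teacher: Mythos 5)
Your proposal is correct and is precisely the standard argument the paper is invoking when it says ``this follows from the M\"obius inversion formula, see \cite{sp2,ns}'': first upgrade the moment--cumulant relation to the partitioned form $E^{(\sigma)} = \sum_{\pi \le \sigma} c_E^{(\pi)}$ (resp. $\sum_{\pi \le \sigma, \pi \in NC} \kappa_E^{(\pi)}$), then invert in the incidence algebra of $P(n)$ (resp. $NC(n)$) for the fixed tuple $(a_1,\dotsc,a_n)$. The one place where you are (self-admittedly) brief is the free multiplicativity step, where after expanding $E[a_{l+1}\dotsb a_{l+s}] = \sum_{\tau} \kappa_E^{(\tau)}$ inside the $l$-th slot of $E^{(\sigma\setminus V)}$ and applying the inductive hypothesis, one still needs the identity $\kappa_E^{(\pi')}[a_1,\dotsc,a_l\cdot\kappa_E^{(\tau)}[a_{l+1},\dotsc,a_{l+s}],\dotsc,a_n] = \kappa_E^{(\pi'\cup\tau)}[a_1,\dotsc,a_n]$ for general $\tau$, not just $\tau = 1_s$ as in the defining recursion; this is the ``nesting consistency'' lemma you allude to, and it does indeed follow by a short further induction, as worked out carefully in \cite{ns}. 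Since the paper simply defers to \cite{sp2,ns}, you have filled in exactly the expected argument rather than found a different route.
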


\begin{pf}
This follows from the \textit{M\"{o}bius inversion formula}; see \cite{sp2,ns}.~%
\end{pf}

\subsection*{Easy quantum groups} We will now briefly recall
some notions and results from~\cite{bsp}.

Consider a compact group $G \subset O_n$. By the Stone--Weierstrauss
theorem, $C(G)$ is generated by the $n^2$ coordinate functions $u_{ij}$
sending a matrix in $G$ to its $(i,j)$ entry. The structure of $G$ as a
compact group is captured by the commutative Hopf C$^*$-algebra $C(G)$
together with comultiplication, counit and antipode determined by
\begin{eqnarray*}
\Delta(u_{ij}) &=& \sum_{k=1}^n u_{ik} \otimes u_{kj},\\
\epsilon(u_{ij}) &=& \delta_{ij},\\
S(u_{ij}) &=& u_{ji}.
\end{eqnarray*}
Dropping the condition of commutativity, we obtain the following
definition, adapted from the fundamental paper of Woronowicz~\cite{wor1}.

\begin{definition}
An \textit{orthogonal Hopf algebra} is a unital C$^*$-algebra $A$
generated by $n^2$ self-adjoint elements $u_{ij}$, such that the
following conditions hold:
\begin{longlist}[(3)]
\item[(1)] The inverse of $u = (u_{ij}) \in M_n(A)$ is the transpose $u^t =
(u_{ji})$.
\item[(2)]$\Delta(u_{ij}) = \sum_{k} u_{ik} \otimes u_{kj}$ determines a
morphism $\Delta\dvtx A \to A \otimes A$.
\item[(3)]$\epsilon(u_{ij}) = \delta_{ij}$ defines a morphism $\epsilon
\dvtx A \to\C$.
\item[(4)]$S(u_{ij}) = u_{ji}$ defines a morphism $S\dvtx A \to A^{op}$.
\end{longlist}
\end{definition}

It follows from the definitions that $\Delta, \epsilon, S$ satisfy
the usual Hopf algebra axioms. If $A$ is an orthogonal Hopf algebra, we
use the heuristic formula ``$A = C(G)$,'' where $G$ is an \textit
{compact orthogonal quantum group}. Of course if $A$ is noncommutative,
then $G$ cannot exist as a concrete object, and all statements about
$G$ must be interpreted in terms of the Hopf algebra $A$.

The following two examples, constructed by Wang in~\cite{wang1,wang2},
are fundamental to our considerations.

\begin{definition}
{\baselineskip=0pt
\begin{longlist}[(2)]
\item[(1)]$A_o(n)$ is the universal C$^*$-algebra generated by $n^2$
self-adjoint elements $u_{ij}$, such that $u = (u_{ij}) \in
M_n(A_o(n))$ is orthogonal.
\item[(2)]$A_s(n)$ is the universal C$^*$-algebra generated by $n^2$
projections $u_{ij}$, such that the sum along any row or column of $u =
(u_{ij}) \in M_n(A_s(n))$ is the identity.
\end{longlist}}
\end{definition}

As discussed above, we use the notation $A_o(n) = C(O_n^+)$, $A_s(n) =
C(S_n^+)$, and call $O_n^+$ and $S_n^+$ the \textit{free orthogonal
group} and \textit{free permutation group}, respectively.

We now recall the ``easiness'' condition from~\cite{bsp} for a compact
orthogonal quantum group $S_n \subset G \subset O_n^+$. Let $u,v$ be
the fundamental representations of $G,S_n$ on $\C^n$, respectively. By
functoriality, the space $\Hom(u^{\otimes k},u^{\otimes l})$ of
intertwining operators is contained in $\Hom(v^{\otimes k}, v^{\otimes
l})$ for any $k,l$. But the Hom-spaces for $v$ are well known: they are
spanned by operators $T_\pi$ with $\pi$ belonging to the set $P(k,l)$
of partitions between $k$ upper and $l$ lower points. Explicitly, if
$e_1,\dotsc,e_n$ denotes the standard basis of $\C^n$, then the
formula for $T_\pi$ is given by
\[
T_\pi(e_{i_1} \otimes\dotsb\otimes e_{i_k}) = \sum_{j_1,\dotsc
,j_l} \delta_\pi
\pmatrix{\displaystyle i_1 \dotsb i_k
\cr\displaystyle
  j_1\dotsb j_l
}
e_{j_1} \otimes\dotsb\otimes e_{j_l}.
\]
Here the $\delta$ symbol appearing on the right-hand side is 1 when
the indices ``fit,'' that is, if each block of $\pi$ contains equal
indices, and 0 otherwise.

It follows from the above discussion that $\Hom(u^{\otimes k},
u^{\otimes l})$ consists of certain linear combinations of the
operators $T_\pi$, with $\pi\in P(k,l)$. We call $G$ ``easy'' if
these spaces are spanned by partitions.

\begin{definition}
A compact orthogonal quantum group $S_n \subset G \subset O_n^+$ is
called \textit{easy} if for each $k,l \in\N$, there exist sets
$D(k,l) \subset P(k,l)$ such that $\Hom(u^{\otimes k}, u^{\otimes l})
= \mathrm{span}(T_\pi\dvtx \pi\in D(k,l))$. If we have $D(k,l) \subset
\mathit{NC}(k,l)$ for each $k,l \in\N$, we say that $G$ is a \textit{free
quantum group}.
\end{definition}

There are four natural examples of classical groups which are easy:
\vspace*{10pt}\begin{center}
{\fontsize{9pt}{11pt}\selectfont{
\begin{tabular}{@{}lc@{}}
\hline
\textbf{Group} & \textbf{Partitions}\\
\hline
Permutation group $S_n$ & $P$: All partitions\\
Orthogonal group $O_n$ & $P_2$: Pair partitions\\
Hyperoctahedral group $H_n$ & $P_h$: Partitions with even block sizes\\
Bistochastic group $B_n$ & $P_b$: Partitions with block size $\leq2$\\
\hline
\end{tabular}}}
\end{center}\vspace*{10pt}
There are also the two trivial modifications $S_n' = S_n \times\Z_2$
and $B_n' = B_n \times\Z_2$, and it was shown in~\cite{bsp} that
these six examples are the only ones.\vadjust{\goodbreak}

There is a one-to-one correspondence between classical easy groups and
free quantum groups, which on a combinatorial level corresponds to
restricting to noncrossing partitions:
\vspace*{10pt}
\begin{center}
{\fontsize{9pt}{11pt}\selectfont{
\begin{tabular}{@{}lc@{}}
\hline
\textbf{Quantum group} & \textbf{Partitions}\\
\hline
$S_n^+$ & $\mathit{NC}$: All noncrossing partitions\\
$O_n^+$ & $\mathit{NC}_2$: Noncrossing pair partitions\\
$H_n^+$ & $\mathit{NC}_h$: Noncrossing partitions with even block
sizes\\
$B_n^+$ & $\mathit{NC}_b$: Noncrossing partitions with block size $\leq2$\\
\hline
\end{tabular}}}
\end{center}\vspace*{10pt}
There are also free versions of $S_n',B_n'$, constructed in~\cite{bsp}.

In general, the class of easy quantum groups appears to be quite rigid
(see~\cite{bcs1} for a discussion here). However, two more examples
can be obtained as ``half-liberations.'' The idea is that instead of
removing the commutativity relations from the generators $u_{ij}$ of
$C(G)$ for a classical easy group $G$, which would produce $C(G^+)$, we
instead require that the the generators ``half-commute,'' that is, $abc
= cba$ for $a,b,c \in\{u_{ij}\}$. More precisely, we define $C(G^*) =
C(G^+)/I$, where $I$ is the ideal generated by the relations $abc =
cba$ for $a,b,c \in\{u_{ij}\}$. For $G = S_n,S_n',B_n,B_n'$ we have
$G^* = G$, however for $O_n,H_n$, we obtain new quantum groups
$O_n^*,H_n^*$. The corresponding partition categories $E_2,E_h$ consist
of all pair partitions, respectively all partitions, which are \textit
{balanced} in the sense that each block contains as many odd as even legs.

\section{Half independence}\label{sec:halfindependence}

In this section, we introduce a new kind of independence which appears
in the de Finetti theorems for the half-liberated quantum groups $H^*$
and $O^*$. To define this notion, we require that the variables have a
certain degree of commutativity.

\begin{definition}\label{halfcommutedef}
Let $(x_i)_{i \in I}$ be a family of noncommutative random variables.
We say that the variables \textit{half-commute} if
\[
x_ix_jx_k = x_kx_jx_i
\]
for all $i,j,k \in I$.
\end{definition}

Observe that if $(x_i)_{i \in I}$ half-commute, then in particular
$x_i^2$ commutes with $x_j$ for any $i,j \in I$.

\begin{definition}
Let $(\mc A,E\dvtx \mc A \to\mc B)$ be an operator-valued probability
space, and suppose that $\mc B$ is contained in the center of $\mc A$.
Let $(x_i)_{i \in I}$ be a~family of random variables in $\mc A$ which
half-commute. We say that $(x_i)_{i \in I}$ are \textit{conditionally
half-independent given $\mc B$}, or \textit{half-independent with
respect to $E$}, if the following conditions are satisfied:
\begin{longlist}[(2)]
\item[(1)] The variables $(x_i^2)_{i \in I}$ are conditionally independent
given $\mc B$.\vadjust{\goodbreak}
\item[(2)] For any $i_1,\dotsc,i_k \in I$, we have
\[
E[x_{i_1}\dotsb x_{i_k}] = 0
\]
unless for each $i \in I$ the set of $1 \leq j \leq k$ such that $i_j =
i$ contains as many odd as even numbers, that is, unless $\ker\mathbf
i$ is balanced.
\end{longlist}
If $B = \C$, then the variables are simply called
\textit{half-independent}.\vspace*{-2pt}
\end{definition}

\begin{remark}
As a first remark, we note that half-independence is defined only
between random variables and not at the level of algebras, in contrast
with classical and free independence. In fact, it is known from \cite
{sp1} there are no other good notions of independence between unital
algebras other than classical and free.\vspace*{-2pt}
\end{remark}

The conditions may appear at first to be somewhat artificial, but are
motivated by the following natural example.\vspace*{-2pt}

\begin{example}\label{halfexample} Let $(\Omega, \Sigma,\mu)$ be a
(classical) probability space, and let $L(\mu)$ denote the algebra of
complex-valued random variables on $\Omega$ with finite moments of all orders.
\begin{longlist}[(2)]
\item[(1)] Let $(\xi_i)_{i \in I}$ be a family of independent random
variables in $L(\mu)$. Suppose that for each $i \in I$, the
distribution of $\xi_i$ is such that
\[
\mb E[\xi_i^n \overline{\xi_i^m}] = 0
\]
unless $n = m$. Define random variables $x_i$ in $(M_2(L(\mu)), \mb E
\circ\tr)$ by
\[
x_i =
\pmatrix{\displaystyle
0 & \xi_i\cr\displaystyle  \overline\xi_i & 0
}
.
\]
A simple computation shows that the variables $(x_i)_{i \in I}$
half-commute. Since
\[
x_i^2 = |\xi_i|^2 I_2,
\]
it is clear that $(x_i^2)_{i \in I}$ are independent with respect to
$\mb E \circ\tr$. Moreover, the assumption on the distributions of
the $\xi_i$ clearly implies that $\mb E[\tr[x_{i_1}\dotsb x_{i_k}]] =
0$ unless $k$ is even and $\ker\mathbf i$ is balanced. So $(x_i)_{i
\in I}$ are half-independent.

Observe also that the distribution of $x_i$ is equal to that of $(\xi
_i\overline{\xi_i})^{1/2}$, where the square root is chosen such that
the distribution is even. We call this the \textit{squeezed version}
of the complex distribution $\xi_i$ (cf.~\cite{bsp}).

\item[(2)] Of particular interest is the case that the $(\xi_i)_{i \in I}$
have complex Gaussian distributions. Here the distribution of $x_i$ is
the squeezed version of the complex Gaussian $\xi_i$, which is a
symmetrized Rayleigh distribution.\vspace*{-2pt}
\end{longlist}
\end{example}

\begin{remark}
We will show in Proposition~\ref{halfindependentform} below that any
half-independ\-ent family can be modeled as in the example above. First,
we will show that, as for classical and free independence, the joint
distribution of a family of half-independent random variables $(x_i)_{i
\in I}$ is determined by the distributions of $x_i$ for $i \in I$. It
is convenient to first introduce the following family of permutations
which are related to the half-commutation relation.\vadjust{\goodbreak}
\end{remark}

\begin{definition}
We say that a permutation $\omega\in S_n$ \textit{preserves parity}
if $\omega(i) \equiv i~(\operatorname{mod} 2)$ for $1 \leq i \leq n$.
\end{definition}

The collection of parity preserving partitions in $S_n$ clearly form a
subgroup, which is simply $S(\{1,3,\dotsc\}) \times S(\{2,4,\dotsc\}
)$. Moreover, this subgroup is generated by the transpositions $(i\;
i+2)$ for $1 \leq i \leq n-2$. It follows that if $(x_i)_{i \in I}$
half-commute, then
\[
x_{i_1}\dotsb x_{i_n} = x_{i_{\omega(1)}}\dotsb x_{i_{\omega(n)}},
\]
whenever $\omega\in S_n$ preserves parity.

\begin{lemma}\label{halfjoint}
Let $(\mc A,E\dvtx \mc A \to\mc B)$ be an operator-valued probability space
such that $\mc B$ is contained in the center of $\mc A$. Suppose that
$(x_i)_{i \in I}$ is a family of random variables in $\mc A$ which are
conditionally half-independent given $\mc B$. Then the $\mc B$-valued
joint distribution of $(x_i)_{i \in I}$ is uniquely determined by the
$\mc B$-valued distributions of $x_i$ for $i \in I$.
\end{lemma}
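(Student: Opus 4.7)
The plan is to reduce an arbitrary $\mc B$-valued joint moment to single-variable moments of the form $E[x_i^{2m}]$. First, since $\mc B$ lies in the center of $\mc A$, we may pull all the $b_j$'s to the front and reduce to
\begin{equation*}
E[b_0 x_{i_1} b_1 \cdots x_{i_n} b_n] = b_0 b_1 \cdots b_n \cdot E[x_{i_1} \cdots x_{i_n}],
\end{equation*}
so it suffices to show that each pure moment $E[x_{i_1} \cdots x_{i_n}]$ is determined by the individual $\mc B$-valued distributions of the $x_i$.

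If $\ker \mathbf{i}$ is not balanced, condition (2) in the definition of conditional half-independence gives $E[x_{i_1} \cdots x_{i_n}] = 0$. Otherwise, for each $i \in I$ appearing in the sequence, let $m_i$ denote the common number of odd-indexed and even-indexed positions $j \in \{1,\ldots,n\}$ with $i_j = i$, and enumerate the distinct values of $i_1,\ldots,i_n$ as $j_1,\ldots,j_r$. The key combinatorial step is to exhibit a parity-preserving permutation $\omega \in S_n$ satisfying
\begin{equation*}
(i_{\omega(1)}, \ldots, i_{\omega(n)}) = (\underbrace{j_1, \ldots, j_1}_{2m_{j_1}}, \underbrace{j_2, \ldots, j_2}_{2m_{j_2}}, \ldots, \underbrace{j_r, \ldots, j_r}_{2m_{j_r}}).
\end{equation*}
This is possible because on the right-hand side each value $j_k$ likewise occupies exactly $m_{j_k}$ odd and $m_{j_k}$ even positions, so any order-preserving bijection between the sets of odd (respectively even) positions on the two sides assembles into such an $\omega$.

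By the observation immediately preceding this lemma, parity-preserving permutations leave products of half-commuting variables unchanged, and therefore
\begin{equation*}
x_{i_1} x_{i_2} \cdots x_{i_n} = (x_{j_1}^2)^{m_{j_1}} (x_{j_2}^2)^{m_{j_2}} \cdots (x_{j_r}^2)^{m_{j_r}}.
\end{equation*}
Now I invoke condition (1) of half-independence: the family $(x_i^2)_{i \in I}$ is conditionally independent given $\mc B$. Since $j_1,\ldots,j_r$ are distinct, applying this with the polynomials $p_l(t) = t^{m_{j_l}} \in \mc B\langle t\rangle$ yields
\begin{equation*}
E\bigl[(x_{j_1}^2)^{m_{j_1}} \cdots (x_{j_r}^2)^{m_{j_r}}\bigr] = E[x_{j_1}^{2m_{j_1}}] \cdots E[x_{j_r}^{2m_{j_r}}],
\end{equation*}
and each factor on the right is a moment of a single variable, hence determined by its individual $\mc B$-valued distribution. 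The only mildly nontrivial ingredient is the construction of $\omega$, for which one merely needs the bookkeeping observation that ``balanced'' and ``contiguous blocks'' give matching parity profiles; the remainder of the proof is a direct assembly of the two defining conditions of half-independence.
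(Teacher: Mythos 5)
Your proof is correct and takes essentially the same approach as the paper: both vanish the non-balanced moments, use a parity-preserving permutation to collect each index into a contiguous even-length block and rewrite the product in terms of squares, then invoke the conditional independence of $(x_i^2)_{i \in I}$ to reduce to single-variable moments. Your version is slightly more explicit about the $\mc B$-valued reduction via centrality and about how $\omega$ is constructed, but these are details the paper leaves implicit rather than a genuinely different route.
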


\begin{pf}
Let $i_1,\dotsc,i_k \in I$. We know that
\[
E[x_{i_1}\dotsb x_{i_k}] = 0
\]
unless we have that for each $i \in I$, the set of $1 \leq j \leq k$
such that $i_j =i$ has as many odd as even elements. So suppose that
this the case. By the remark above, we know that $x_{i_1}\dotsb x_{i_k}
= x_{i_{\omega(1)}}\dotsb x_{i_{\omega(k)}}$ whenever $\omega\in
S_k$ is parity preserving. With an appropriate choose of $\omega$, it
follows that
\[
x_{i_1}\dotsb x_{i_k} = x_{j_1}^{2(k_1)}\dotsb x_{j_m}^{2(k_m)}
\]
for some $j_1,\dotsc,j_m \in I$ and $k_1,\dotsc,k_m \in\N$ such
that $k = 2(k_1 + \dotsb+ k_m)$. Since the joint distribution of
$(x_i^2)_{i \in I}$ is clearly determined by the distributions of $x_i$
for $i \in I$, the result follows.
\end{pf}

\begin{proposition}\label{halfindependentform}
Let $(x_i)_{i \in I}$ be a half-commuting family of random variables in
a W$^*$-probability space $(M,\varphi)$ which are half-independent.
Then there are independent complex-valued random variables $(\xi_i)_{i
\in I}$ such that $\mb E[\xi_i^n \overline{\xi_i^m}] = 0$ unless $n
= m$, and such that $(x_i)_{i \in I}$ has the same joint distribution
as the family $(y_i)_{i \in I}$,
\[
y_i =
\pmatrix{\displaystyle 0 & \xi_i\cr\displaystyle  \overline\xi_i & 0
}
.
\]
\end{proposition}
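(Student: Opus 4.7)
The plan is to reduce the problem to matching individual distributions using Lemma \ref{halfjoint}, and then to construct the $\xi_i$ explicitly on a classical probability space as independent products of an appropriate positive random modulus and an independent uniform phase.

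First I would observe that each $x_i$ has an even distribution: applying condition (2) of the definition of half-independence with $i_1 = \cdots = i_k = i$ forces $\varphi(x_i^{k}) = 0$ whenever $k$ is odd, since the single-block partition $1_{k}$ is balanced only when $k$ is even. In particular $x_i^2$ is a positive bounded operator whose scalar distribution is a compactly supported probability measure $\nu_i$ on $[0,\infty)$.

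Next I would build the $\xi_i$ on a classical probability space carrying mutually independent pairs $(r_i,\theta_i)_{i \in I}$, where $r_i \geq 0$ has the push-forward of $\nu_i$ under $t \mapsto \sqrt{t}$ (so that $r_i^2$ has law $\nu_i$), $\theta_i$ is uniform on $[0,2\pi)$, and $r_i$ is independent of $\theta_i$. Setting $\xi_i = r_i e^{i\theta_i}$, the rotational invariance supplied by $\theta_i$ yields $\mb E[\xi_i^n \overline{\xi_i^m}] = \mb E[r_i^{n+m}]\,\mb E[e^{i(n-m)\theta_i}] = 0$ whenever $n \neq m$, and the $(\xi_i)_{i \in I}$ are independent by construction. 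I then form the bounded matrix-valued random variables $y_i = \begin{pmatrix} 0 & \xi_i \\ \overline{\xi_i} & 0 \end{pmatrix}$ in $M_2(L^\infty(\mu))$; they are bounded because $\nu_i$ has compact support. By Example \ref{halfexample}, the family $(y_i)_{i \in I}$ half-commutes and is half-independent with respect to the state $\mb E \circ \tr$.

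To conclude that $(x_i)$ and $(y_i)$ have the same joint distribution, I would apply Lemma \ref{halfjoint} with $\mc B = \C$: both families are half-independent, so their joint distributions are each determined by the individual distributions of $x_i$ and $y_i$. Both variables are bounded self-adjoint with even distributions, and their even moments agree, since $(\mb E \circ \tr)(y_i^{2k}) = \mb E[|\xi_i|^{2k}] = \int t^k\,d\nu_i(t) = \varphi(x_i^{2k})$. The only substantive step is the initial observation that each $x_i$ has an even distribution; once that is in hand, the rotationally symmetric modulus-times-phase construction of $\xi_i$ automatically produces the required vanishing moments, and the rest of the argument is a direct verification using the lemma.
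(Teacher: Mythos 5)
Your proof is correct and takes essentially the same approach as the paper: both build $\xi_i$ on a classical probability space as an independent modulus (with law given by $|x_i|$) times an independent rotationally-invariant phase, then invoke Example \ref{halfexample} to get half-independence of the $y_i$ and Lemma \ref{halfjoint} to match joint distributions from individual ones. The paper phrases the phase as a Haar unitary $U_i$ and sets $\xi_i = U_i X_i$ with $X_i \sim x_i$, which is the same construction you describe via $\xi_i = r_i e^{i\theta_i}$; your preliminary observation that each $x_i$ has an even distribution is a harmless (and slightly more explicit) addition.
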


\begin{pf}
Let $(X_i)_{i \in I}$ be a family of independent random variables such
that $X_i$ has the same distribution as $x_i$. Let $(U_i)_{i \in I}$ be
a family of independent Haar unitary random variables which are
independent from $(X_i)_{i \in I}$, and let $\xi_i = U_iX_i$. Then
$(\xi_i)_{i \in I}$ are independent and
\[
\mb E[\xi_i^n\overline{\xi_i^m}] = \mb E[X_i^{n+m}]\mb
E[U_i^n\overline{U_i}^m] = \delta_{nm}\varphi(x_i^{2n}).\vadjust{\goodbreak}
\]
From Example~\ref{halfexample}, the variables $(y_i)_{i \in I}$
defined by
\[
y_i =
\pmatrix{\displaystyle 0 & \xi_i\cr\displaystyle  \overline\xi_i & 0
}
\]
are half-independent, and $y_i$ has the same distribution as $x_i$ for
each $i \in I$. By Lemma~\ref{halfjoint}, $(y_i)_{i \in I}$ has the
same joint distribution as $(x_i)_{i \in I}$.
\end{pf}

\begin{remark}
We have stated our results in the scalar case $\mc B = \C$ for
simplicity, but note that with suitable modifications, Example \ref
{halfexample} and Proposition~\ref{halfindependentform} apply equally
well to conditionally half-independent families.
\end{remark}

We will now develop a combinatorial theory for half-independence, based
on the family $E_h$ of balanced partitions.

\begin{definition}
Let $(\mc A,E\dvtx \mc A \to\mc B)$ be an operator-valued probability
space, and suppose that $\mc B$ is contained in the center of $\mc A$.
Let $(x_i)_{i \in I}$ be a~family of random-variables in $\mc A$, and
suppose that
\[
E[x_{i_1}\dotsb x_{i_k}] = 0
\]
for any odd $k$ and $i_1,\dotsc,i_k \in I$. Define the \textit
{half-liberated cumulants} $\gamma_E^{(n)}$ by the \textit
{half-liberated moment-cumulant formula}
\[
E[x_{i_1}\dotsb x_{i_k}] = \mathop{\mathop{\sum}_{\pi\in E_h(k)}}
_{ \pi\leq \ker\mathbf i} \gamma_E^{(\pi)}[x_{i_1},\dotsc,x_{i_k}],
\]
where $\gamma_E^{(\pi)}[x_{i_1},\dotsc,x_{i_k}]$ is defined, as in
the classical case, by the formula
\[
\gamma_E^{(\pi)}[x_{i_1},\dotsc,x_{i_k}] = \prod_{V \in\pi}
\gamma_E^{(|V|)}(V)[x_{i_1},\dotsc,x_{i_k}].
\]
\end{definition}

Observe that both sides of the moment-cumulant formula above are equal
to zero for odd values of $k$, and for even values the right hand side
is equal to $\gamma_E^{(k)}[x_{i_1},\dotsc,x_{i_k}]$ plus products of
lower ordered terms and hence $\gamma_E^{(k)}$ may be solved for
recursively. As in the free and classical cases, we may apply the M\"
{o}bius inversion formula to obtain the following equation for~$\gamma
_E^{(\pi)}$, $\pi\in E_h(k)$:
\[
\gamma_E^{(\pi)}(x_{i_1},\dotsc,x_{i_k}) = \mathop{\mathop{\sum}_{\sigma
\in E_h(k)}}_{ \sigma\leq\pi} \mu_{E_h(k)}(\sigma,\pi) E^{(\pi
)}[x_{i_1},\dotsc,x_{i_k}].
\]

\begin{theorem}\label{halfcumulantcharacterization}
Let $(\mc A,E\dvtx \mc A \to\mc B)$ be an operator-valued probability
space, and suppose that $\mc B$ is contained in the center of $\mc A$.
Suppose $(x_i)_{i \in\N}$ is a family of variables in $\mc A$ which
half-commute. Then the following conditions are equivalent:
\begin{longlist}[(2)]
\item[(1)]$(x_i)_{i \in\N}$ are half-independent with respect to $E$.
\item[(2)]$E[x_{i_1}\dotsb x_{i_k}] = 0$ whenever $k$ is odd, and
\[
\gamma^{(\pi)}_{E}[x_{i_1},\dotsc,x_{i_k}] = 0
\]
for any $\pi\in E_h(k)$ such that $\pi\not\leq\ker\mathbf i$.
\end{longlist}
\end{theorem}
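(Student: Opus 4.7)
The plan is to prove both directions by strong induction on $k$, with the central tool being the unrestricted Möbius inversion identity
\[E[x_{i_1}\cdots x_{i_k}] = \sum_{\sigma \in E_h(k)} \gamma_E^{(\sigma)}(x_{i_1},\dotsc,x_{i_k}),\]
obtained by applying the Möbius formula stated just before the theorem with top partition $\tau = 1_k$; call this identity $(*)$. Condition (2) is exactly the statement that in $(*)$ only the terms with $\sigma \leq \ker \mathbf{i}$ survive.

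For the direction $(2)\Rightarrow(1)$, odd-length moments vanish by hypothesis. If $\ker \mathbf{i}$ has an unbalanced block, then no $\sigma \in E_h(k)$ can satisfy $\sigma \leq \ker \mathbf{i}$, since a balanced refinement would decompose each $\ker \mathbf{i}$-block into balanced sub-blocks and force it to be balanced; hence the restricted sum from (2) is empty and $E[x_{i_1}\cdots x_{i_k}] = 0$. For conditional independence of the squares, apply (2) to the sequence $\mathbf{i}' = (j_1,\dotsc,j_1,\dotsc,j_m,\dotsc,j_m)$ with $2k_l$ consecutive copies of each distinct $j_l$: every $\ker \mathbf{i}'$-block $W_l$ is automatically balanced, any $\sigma \leq \ker \mathbf{i}'$ splits uniquely as $\bigsqcup_l \sigma_l$ with $\sigma_l \in E_h(W_l)$, and the cumulant sum factors as $\prod_l \bigl(\sum_{\sigma_l \in E_h(W_l)} \gamma_E^{(\sigma_l)}(x_{j_l},\dotsc,x_{j_l})\bigr) = \prod_l E[x_{j_l}^{2k_l}]$, where the inner identity is $(*)$ applied at length $|W_l|$ with all equal indices. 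Centrality of $\mc B$ and $\mc B$-linearity of $E$ then upgrade this moment factorization to conditional independence.

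For the direction $(1)\Rightarrow(2)$ I induct on $k$, the base case $k = 2$ being immediate from $\gamma_E^{(2)}(x_i,x_j) = E[x_i x_j] = 0$ for $i\neq j$. In the inductive step, fix $\pi \in E_h(k)$ with $\pi \not\leq \ker \mathbf{i}$. If $|\pi| > 1$, the product formula $\gamma_E^{(\pi)} = \prod_{V\in\pi}\gamma_E^{(|V|)}$ combines with the inductive hypothesis at length $|V| < k$ (applied to the one-block partition $1_{|V|}$) to give $\gamma_E^{(\pi)} = 0$, since $\pi \not\leq \ker \mathbf{i}$ forces some block $V$ to carry non-constant indices. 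The only remaining case is $\pi = 1_k$, which forces $k$ even and the indices not all equal. Substituting into $(*)$ and using the multi-block case just proved, the identity collapses to
\[E[x_{i_1}\cdots x_{i_k}] = \gamma_E^{(1_k)}(x_{i_1},\dotsc,x_{i_k}) + \sum_{\substack{\sigma \in E_h(k)\\ \sigma \leq \ker \mathbf{i}}} \gamma_E^{(\sigma)}(x_{i_1},\dotsc,x_{i_k}).\]
When $\ker \mathbf{i}$ is unbalanced both the sum and $E$ vanish by (1), giving $\gamma_E^{(1_k)} = 0$. When $\ker \mathbf{i}$ is balanced with blocks $W_1,\dotsc,W_r$ (and $r \geq 2$), the sum factors as $\prod_l E[x_{j_l}^{|W_l|}]$, which matches $E[x_{i_1}\cdots x_{i_k}]$ by the moment factorization extracted from (1)---use parity-preserving permutations (generated by half-commutation, as noted before Lemma \ref{halfjoint}) to rearrange the word into $x_{j_1}^{|W_1|}\cdots x_{j_r}^{|W_r|}$, then apply classical conditional independence of the squares. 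Again $\gamma_E^{(1_k)} = 0$, closing the induction.

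The main obstacle I expect is this single-block case $\pi = 1_k$: the maximal cumulant does not split into strictly shorter pieces and so is invisible to the inductive hypothesis. The resolution is to promote $(*)$ to a global sum rule, so that $\gamma_E^{(1_k)}$ is forced to vanish by an independent computation of $E[x_{i_1}\cdots x_{i_k}]$ supplied by (1), which in turn rests on the combinatorial interplay between half-commutation (permitting rearrangement into a product of squares) and the assumed conditional independence of the squares.
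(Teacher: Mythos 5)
Your proof of $(2)\Rightarrow(1)$ closely parallels the paper's: you invoke the moment-cumulant formula, observe the sum empties when $\ker\mathbf{i}$ is unbalanced, and factor the cumulant sum over the interval partition $\tau$ with consecutive blocks of sizes $2k_1,\dotsc,2k_m$. Your proof of $(1)\Rightarrow(2)$ is genuinely different from the paper's. The paper bootstraps off the direction already proved: it builds a universal half-commuting algebra $\mc A' = \mc B\langle y_i\rangle/\langle y_iy_jy_k = y_ky_jy_i\rangle$ with a conditional expectation $E'$ whose cumulants are defined to vanish off $\ker\mathbf{i}$, applies $(2)\Rightarrow(1)$ to conclude that the $y_i$ are half-independent, and then invokes Lemma~\ref{halfjoint} to match joint distributions and hence cumulants. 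You instead argue directly by induction on $k$: the multi-block case is handled by the block-product formula plus the inductive hypothesis on one-block cumulants of shorter length, and the remaining $\pi = 1_k$ case is forced to vanish by comparing the unrestricted moment-cumulant identity against the moment factorization that $(1)$ supplies, using parity-preserving rearrangements. Both routes are correct; the paper's model argument is slicker and avoids the need to factor cumulant sums over an arbitrary balanced $\ker\mathbf{i}$, whereas your induction is more hands-on. One point worth spelling out in your write-up: when you factor $\sum_{\sigma \leq \ker\mathbf{i}}\gamma_E^{(\sigma)} = \prod_l E[x_{j_l}^{|W_l|}]$ for a general balanced $\ker\mathbf{i} = \{W_1,\dotsc,W_r\}$, the blocks $W_l$ need not be intervals, so the natural monotone relabeling $W_l \to \{1,\dotsc,|W_l|\}$ does not preserve parity (e.g., $W_l = \{1,3,4,6\}$). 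The factorization nevertheless goes through because the inner cumulants $\gamma_E^{(\sigma_l)}(x_{j_l},\dotsc,x_{j_l})$ depend only on block sizes of $\sigma_l$, and balancedness depends only on the odd/even bipartition of the ambient set; since both $W_l$ and $\{1,\dotsc,|W_l|\}$ carry $|W_l|/2$ odds and $|W_l|/2$ evens, one can fix a parity-preserving (non-monotone) bijection under which balanced partitions correspond, with block sizes preserved. With this detail filled in, your argument is complete.
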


\begin{pf}
First, suppose that condition (2) holds. From the moment-cumulant
formula, we have
\[
E[x_{i_1}\dotsb x_{i_k}] = \mathop{\mathop{\sum}_{\pi\in E_h(k)}}_{ \pi\leq
\ker\mathbf i} \gamma_E^{(\pi)}[x_{i_1},\dotsc,x_{i_k}]
\]
for any $k \in\N$ and $i_1,\dotsc,i_k \in I$. Observe that if $\ker
\mathbf i$ is not balanced then there is no $\pi\in E_h(k)$ such that
$\pi\leq\ker\mathbf i$, so it follows that $E[x_{i_1}\dotsb x_{i_k}]
= 0$. It remains to show that $(x_i^2)_{i \in I}$ are independent.
Choose $k_1,\dotsc,k_m \in\N$, distinct $i_1,\dotsc,i_m \in I$ and
let $k = 2(k_1 + \dotsb+ k_m)$. Let $\tau\in E_h(k)$ be the partition
with blocks $\{1,\dotsc,2k_1\},\dotsc,\{2(k_1+\dotsb+
k_{m-1})+1,\dotsc,2k\}$. Then
\begin{eqnarray*}
E \bigl[x_{i_1}^{(2k_1)}\dotsb x_{i_m}^{(2k_m)} \bigr] &=& \mathop{\mathop{\sum}_{\pi\in E_h(k)}}_{ \pi\leq\tau} \gamma_E^{(\pi
)}[x_{i_1},\dotsc,x_{i_1},x_{i_2},\dotsc,x_{i_m},\dotsc,x_{i_m}]\\
&=& \prod_{1 \leq j \leq m} \sum_{\pi\in E_h(2k_j)}
\gamma_E^{(\pi)}[x_{i_j},\dotsc,x_{i_j}]\\
&=& \prod_{1 \leq j \leq m} E\bigl[x_{i_j}^{(2k_j)}\bigr],
\end{eqnarray*}
so that $(x_i^2)_{i \in I}$ are independent and hence $(x_i)_{i \in I}$
are half-independent.

The implication $(1)\Rightarrow(2)$ actually follows from
$(2)\Rightarrow(1)$. Indeed, suppose that $(x_i)_{i \in I}$ are
half-independent. Consider the algebra $\mc A' = \mc B \langle y_i\dvtx  i
\in I \rangle/\langle y_iy_jy_k = y_ky_jy_i\rangle$ of polynomials
in half-commuting indeterminates $(y_i)_{i \in I}$ and coefficients in
$\mc B$. Define a conditional expectation $E'\dvtx \mc A' \to\mc B$ by
\[
E'[y_{i_1}\dotsb y_{i_k}] = \mathop{\mathop{\sum}_{\pi\in E_h(k)}}_{ \pi\leq
\ker\mathbf i} \gamma_{E}^{(\pi)}[x_{i_1},\dotsc,x_{i_k}].
\]
(It is easy to see that $E'$ is well defined, that is, compatible with
the half-commutation relations.) Since the half-liberated cumulants are
uniquely determined by the moment-cumulant formula, it follows that
\[
\gamma_{E'}^{(\pi)}[y_{i_1},\dotsc,y_{i_k}] =
\cases{\displaystyle \gamma_E^{(\pi)}[x_{i_1},\dotsc,x_{i_k}],  &\quad$\pi\leq
\ker\mathbf i$,\cr\displaystyle 0,  &\quad  otherwise.}
\]
By the first part, it follows that $(y_i)_{i \in I}$ are
half-independent with respect to $E'$. Since $y_i$ has the same $\mc
B$-valued distribution as $x_i$, it follows from
Lemma~\ref{halfjoint}\vadjust{\goodbreak}
that $(y_i)_{i \in I}$ have the same joint distribution as $(x_i)_{i
\in I}$. It then follows from the moment-cumulant formula that these
families have the same half-liberated cumulants, and hence $\gamma
_E^{(\pi)}[x_{i_1},\dotsc,x_{i_k}] = 0$ unless $\pi\leq\ker\mathbf
i$.\vspace*{-1pt}
\end{pf}

Recall that (centered) Gaussian and semicircular distributions are
characterized by the property that their nonvanishing cumulants are
those corresponding to pair and noncrossing pair partitions,
respectively. We will now show that for half-independence, it is the
symmetrized Rayleigh distribution which has this property. This follows
from the considerations in~\cite{bsp}, but we include here a direct proof.\vspace*{-1pt}

\begin{proposition}\label{halfgausscharacterization}
Let $x$ be a random variable in $(\mc A,\varphi)$ which has an even
distribution. Then $x$ has a symmetrized Rayleigh distribution if and
only if
\[
\gamma_E^{(\pi)}[x,\dotsc,x] = 0
\]
for any $\pi\in E_h(k)$ such that $\pi\notin E_2(k)$.\vspace*{-1pt}
\end{proposition}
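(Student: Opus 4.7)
The plan is to reduce both implications to an explicit computation of the even moments, exploiting that the half-liberated cumulants are uniquely determined by the moments via the M\"{o}bius inversion formula recalled earlier. The one combinatorial ingredient I will need is the identity $|E_2(2n)| = n!$: a pair partition of $\{1,\dotsc,2n\}$ is balanced if and only if each of its blocks contains one odd leg and one even leg, so balanced pair partitions are in bijection with bijections $\{1,3,\dotsc,2n-1\} \to \{2,4,\dotsc,2n\}$.

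For the ``if'' direction I would suppose that $\gamma_E^{(\pi)}[x,\dotsc,x] = 0$ for every $\pi \in E_h(k) \setminus E_2(k)$, and set $c = \gamma_E^{(2)}(x,x)$. The half-liberated moment-cumulant formula then collapses to
\begin{equation*}
\varphi(x^{2n}) \;=\; \sum_{\pi \in E_2(2n)} c^n \;=\; n!\, c^n,
\end{equation*}
while odd moments of $x$ vanish automatically since every block of a balanced partition has even size. These are precisely the moments of a symmetrized Rayleigh distribution of variance $c$: indeed, taking $\xi$ a complex Gaussian with $\mb E|\xi|^2 = c$ and forming the $2\times 2$ matrix $x$ as in Example \ref{halfexample}, one has $x^2 = |\xi|^2 I_2$, hence $\varphi(x^{2n}) = \mb E|\xi|^{2n} = n!\,c^n$, with odd moments vanishing by the off-diagonal structure.

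For the ``only if'' direction I would invoke abstract uniqueness. If $x$ has a symmetrized Rayleigh distribution, its moments are, by the computation just made, equal to $n!\,c^n$ in even degree and $0$ in odd degree, where $c = \varphi(x^2)$. Introduce a ``model'' variable $x'$ specified by $\gamma_E^{(2)}(x',x') = c$ and $\gamma_E^{(\pi)}[x',\dotsc,x'] = 0$ for all other $\pi \in E_h$; by the ``if'' direction just established, $x'$ has the same moments as $x$. Since the half-liberated cumulants are uniquely determined by the moments through the M\"{o}bius inversion formula, $x$ and $x'$ share all half-liberated cumulants, and so $\gamma_E^{(\pi)}[x,\dotsc,x]$ vanishes for every $\pi \in E_h \setminus E_2$.

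The main (and rather mild) obstacle is the combinatorial count $|E_2(2n)| = n!$; once that is in hand, the rest is formal bookkeeping with the moment-cumulant formula. The one point requiring care is tracking the variance parameter $c$ consistently across the two directions, so that the ``if'' and ``only if'' arguments genuinely refer to the same moment sequence.
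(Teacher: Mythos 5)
Your proposal is correct and follows essentially the same route as the paper: prove the ``if'' direction by counting balanced pair partitions (obtaining $n!\,c^n$ for even moments and $0$ for odd moments), and then dispatch ``only if'' by observing that the half-liberated cumulants are uniquely determined by the moment sequence. Your write-up is slightly more explicit about the uniqueness step and correctly keeps the power $c^n$ (the paper's displayed formula drops the exponent on $\gamma^{(2)}[x,x]$, an evident typo), but the structure and key combinatorial count $|E_2(2n)| = n!$ are identical.
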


\begin{pf}
Since the distribution of $x$ is determined uniquely by its
half-liberated cumulants, it suffices to show that if the cumulants
have the stated property then $x$ has a symmetrized Rayleigh
distribution. Suppose that this is the case, then
\begin{eqnarray*}
\varphi(x^{k}) &=& \sum_{\pi\in E_2(k)} \gamma^{(\pi)}[x,\dotsc
,x]\\
&=& \gamma^{(2)}[x,x] \#\{\pi\in E_2(k)\}.
\end{eqnarray*}
It is easy to see that the number of partitions in $E_2(k)$ is $m!$ if
$k=2m$ is even and is zero if $k$ is odd. Since these agree with the
moments of a symmetrized Rayleigh distribution, the result follows.\vspace*{-1pt}
\end{pf}

\section{Weingarten estimate}\label{sec:weingarten}

It is a fundamental result of Woronowicz~\cite{wor1} that if $G$ is a
compact orthogonal quantum group, then there is a unique state \mbox{$\int
\dvtx C(G) \to\C$}, called the \textit{Haar state}, which is left and
right invariant in the sense that
\[
\biggl(\int{}\otimes{}\mathrm{id}\biggr)\Delta(f) = \int
(f)\cdot1_{C(G)} = \biggl(\mathrm{id} \otimes\int\biggr)\Delta(f)
\qquad\bigl(f \in C(G)\bigr).
\]
If $G \subset O_n$ is a compact group, then the Haar state on $C(G)$ is
given by integrating against the Haar measure on $G$.

One quite useful aspect of the easiness condition for a compact
orthogonal quantum group is that it leads to a combinatorial \textit
{Weingarten formula} for computing the Haar state, which we now recall
from~\cite{bsp}.\vspace*{-1pt}

\begin{definition}
Let $D(k) \subset P(k)$ be a collection of partitions. For \mbox{$n \in\N$},
define the \textit{Gram matrix} $(G_{kn}(\pi,\sigma))_{\pi,\sigma
\in D(k)}$ by the formula
\[
G_{kn}(\pi,\sigma) = n^{|\pi\vee\sigma|}.\vadjust{\goodbreak}
\]
$G_{kn}$ is invertible for $n$ sufficiently large (see Proposition \ref
{West}), define the \textit{Weingarten matrix} $W_{kn}$ to be its inverse.
\end{definition}

\begin{theorem} \label{weingarten}
Let $G \subset O_n^+$ be an easy quantum group and let $D(k) \subset
P(0,k)$ be the corresponding collection of partitions having no upper
points. If $G_{kn}$ is invertible, then
\[
\int u_{i_1j_1}\dotsb u_{i_kj_k} = \mathop{\mathop{\mathop{\sum}_{\pi,\sigma\in
D(k)}}_{\pi\leq\ker\mathbf i}}_{ \sigma\leq\ker\mathbf j}
W_{kn}(\pi,\sigma).
\]
\end{theorem}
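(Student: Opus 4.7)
The plan is to realize the left-hand side as a matrix entry of the orthogonal projection onto the fixed-vector space $\mathrm{Fix}(u^{\otimes k}) \subset (\C^n)^{\otimes k}$, and then use easiness to write that projection explicitly in terms of the spanning vectors $T_\pi$.

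First I would invoke the standard consequence of Woronowicz's theorem on the Haar state: the matrix $P \in M_{n^k}(\C)$ with entries
\begin{equation*}
P_{\mathbf i,\mathbf j} = \int u_{i_1j_1}\dotsb u_{i_kj_k}
\end{equation*}
is the orthogonal projection from $(\C^n)^{\otimes k}$ onto $\mathrm{Fix}(u^{\otimes k})$. This is the content of the fact that $(\mathrm{id}\otimes\int)u^{\otimes k}$ is an idempotent whose range equals $\Hom(1,u^{\otimes k})$, viewed as a subspace of $(\C^n)^{\otimes k}$ via the evident identification. So computing $\int u_{i_1j_1}\dotsb u_{i_kj_k}$ is the same as computing $\langle P\, e_{j_1}\otimes\dotsb\otimes e_{j_k},\, e_{i_1}\otimes\dotsb\otimes e_{i_k}\rangle$.

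Second, I would apply the easiness hypothesis in the special case $(k,l)=(0,k)$, which identifies $\mathrm{Fix}(u^{\otimes k})$ with the span of the vectors
\begin{equation*}
T_\pi = \sum_{j_1,\dotsc,j_k} \delta_\pi(j_1,\dotsc,j_k)\, e_{j_1}\otimes\dotsb\otimes e_{j_k},\qquad \pi\in D(k).
\end{equation*}
A direct count gives
\begin{equation*}
\langle T_\pi, T_\sigma\rangle = \sum_{\mathbf j} \delta_\pi(\mathbf j)\delta_\sigma(\mathbf j) = \#\{\mathbf j : \mathbf j \text{ is constant on each block of } \pi\vee\sigma\} = n^{|\pi\vee\sigma|},
\end{equation*}
so the Gram matrix of the family $\{T_\pi\}_{\pi\in D(k)}$ is exactly $G_{kn}$.

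Third, I would use the elementary linear-algebra fact that if $\{v_\pi\}_{\pi\in D(k)}$ is a (possibly linearly dependent) family spanning a subspace $V$ with Gram matrix $G$, and if $G$ is invertible (so in particular the family is a basis), then the orthogonal projection onto $V$ is
\begin{equation*}
P = \sum_{\pi,\sigma\in D(k)} G^{-1}(\pi,\sigma)\, |v_\pi\rangle\langle v_\sigma|.
\end{equation*}
One verifies this by checking that $P v_\tau = v_\tau$ for each $\tau$ and that $P$ vanishes on $V^\perp$, both of which are immediate from $\langle v_\sigma, v_\tau\rangle = G(\sigma,\tau)$ and the definition of $G^{-1}$. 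Applying this with $v_\pi = T_\pi$ and $G = G_{kn}$ (invertible by assumption), one gets $P = \sum_{\pi,\sigma} W_{kn}(\pi,\sigma)\,|T_\pi\rangle\langle T_\sigma|$.

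Finally, the computation finishes by noting $\langle T_\pi, e_{i_1}\otimes\dotsb\otimes e_{i_k}\rangle = \delta_\pi(\mathbf i)$, which equals $1$ when $\pi\leq\ker\mathbf i$ and $0$ otherwise. Substituting gives the stated sum over $\pi\leq\ker\mathbf i$ and $\sigma\leq\ker\mathbf j$. The only slightly non-routine point is articulating the Woronowicz projection statement and its identification with $\Hom(1,u^{\otimes k})$; once that is in place, the rest is bookkeeping with the combinatorial vectors $T_\pi$.
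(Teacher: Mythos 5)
Your proof is correct and is in fact the standard argument. The paper itself does not reprove Theorem \ref{weingarten}; the remark following it notes that the statement is taken from \cite{bsp} and traces the lineage back through \cite{wein,col,cos,bc1,bc2}. Your argument --- identify $(\mathrm{id}\otimes\int)u^{\otimes k}$ with the orthogonal projection onto $\mathrm{Fix}(u^{\otimes k})$, use easiness to span that space by the vectors $T_\pi$ for $\pi\in D(k)$, observe $\langle T_\pi,T_\sigma\rangle = n^{|\pi\vee\sigma|} = G_{kn}(\pi,\sigma)$, and expand the projection via the inverse Gram matrix --- is precisely the proof given in those references, and every step is correctly justified (in particular, the invertibility of $G_{kn}$ guarantees that the $T_\pi$ form a basis, so the projection formula $P=\sum_{\pi,\sigma}W_{kn}(\pi,\sigma)\,|T_\pi\rangle\langle T_\sigma|$ applies without any caveats about linear dependence). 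There is no gap.
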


\begin{remark}
The statement of the theorem above is from~\cite{bsp}, but goes back
to work of Weingarten~\cite{wein} and was developed in a series of
papers~\cite{col,cos,bc1,bc2}. Note that this reduces the problem of
evaluating integrals over $G$ to computing the entries of the
Weingarten matrix. We will now give an estimate on the asymptotic
behavior of $W_{kn}$ as $n \to\infty$. This unifies and extends the
estimates given in~\cite{bc1} and~\cite{cur3} for $O^+, S^+$.
\end{remark}

\begin{proposition}\label{West}
Let $k \in\N$ and $D(k) \subset P(k)$. For $n$ sufficiently large,
the Gram matrix $G_{kn}$ is invertible. Moreover, the entries of the
Weingarten matrix $W_{kn}=G_{kn}^{-1}$ satisfy the following:
\begin{longlist}[(2)]
\item[(1)] $W_{kn}(\pi,\sigma) = O(n^{|\pi\vee\sigma| - |\pi| -
|\sigma|})$.
\item[(2)] If $\pi\leq\sigma$, then
\[
n^{|\pi|}W_{kn}(\pi,\sigma) = \mu_{D(k)}(\pi,\sigma) + O(n^{-1}),
\]
\end{longlist}
where $\mu_{D(k)}$ is the M\"{o}bius function on the partially ordered
set $D(k)$ under the restriction of the order on $ P(k)$.
\end{proposition}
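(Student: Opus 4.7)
The plan is to invert $G_{kn}$ via a diagonally-dominant factorization and extract asymptotics from a Neumann series.

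\emph{Setup and invertibility.} Let $\Lambda$ be the diagonal matrix on $D(k)$ with $\Lambda(\pi,\pi) = n^{|\pi|}$, and factor $G_{kn} = \Lambda^{1/2}(I+N)\Lambda^{1/2}$, where $N(\pi,\pi) = 0$ and
\begin{equation*}
N(\pi,\sigma) = n^{|\pi \vee \sigma| - (|\pi|+|\sigma|)/2} \qquad (\pi \neq \sigma).
\end{equation*}
Since $|\pi \vee \sigma| \leq \min(|\pi|,|\sigma|)$, a short case check (separating $|\pi| = |\sigma|$, where strict coarsening forces $|\pi \vee \sigma| \leq |\pi|-1$, from $|\pi| \neq |\sigma|$) shows the exponent is $\leq -1/2$ whenever $\pi \neq \sigma$. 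Hence $\|N\| \to 0$ as $n \to \infty$, so $I+N$ and therefore $G_{kn}$ is invertible for large $n$, and
\begin{equation*}
W_{kn} = \Lambda^{-1/2}\sum_{l \geq 0}(-N)^l\, \Lambda^{-1/2}.
\end{equation*}

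\emph{Part (1) via a pseudo-metric.} Define $\delta(\pi,\sigma) = (|\pi|+|\sigma|)/2 - |\pi \vee \sigma| \geq 0$. Submodularity of rank in the partition lattice $P(k)$, $|\alpha \vee \beta| + |\alpha \wedge \beta| \geq |\alpha|+|\beta|$, applied to $\alpha = \pi \vee \rho$ and $\beta = \rho \vee \sigma$, together with $|\pi \vee \rho \vee \sigma| \leq |\pi \vee \sigma|$ and $|(\pi\vee\rho)\wedge(\rho\vee\sigma)| \leq |\rho|$, yields the triangle inequality $\delta(\pi,\sigma) \leq \delta(\pi,\rho) + \delta(\rho,\sigma)$. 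Expanding $N^l(\pi,\sigma)$ as a sum over chains $\pi = \rho_0 \neq \rho_1 \neq \cdots \neq \rho_l = \sigma$ in $D(k)$, each summand equals $n^{-\sum_i \delta(\rho_{i-1},\rho_i)}$, bounded by $n^{-\delta(\pi,\sigma)}$ by telescoping. Since $\delta(\rho_{i-1},\rho_i) \geq 1/2$, the series converges uniformly in $n$ large, so
\begin{equation*}
|W_{kn}(\pi,\sigma)| = O\bigl(n^{-(|\pi|+|\sigma|)/2 - \delta(\pi,\sigma)}\bigr) = O\bigl(n^{|\pi \vee \sigma| - |\pi| - |\sigma|}\bigr).
\end{equation*}

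\emph{Part (2) and the leading term.} Assume $\pi \leq \sigma$. Rearranging exponents gives
\begin{equation*}
n^{|\pi|} W_{kn}(\pi,\sigma) = \sum_{l \geq 0} (-1)^l \sum_{\substack{\pi = \rho_0 \neq \cdots \neq \rho_l = \sigma \\ \rho_i \in D(k)}} n^{\sum_{i=1}^l (|\rho_{i-1} \vee \rho_i| - |\rho_i|)}.
\end{equation*}
Each inner exponent $|\rho_{i-1} \vee \rho_i| - |\rho_i|$ is a non-positive integer, zero iff $\rho_{i-1} \leq \rho_i$. Thus the constant-in-$n$ contribution comes exactly from strictly increasing chains $\pi = \rho_0 < \rho_1 < \cdots < \rho_l = \sigma$ in $D(k)$, each contributing $+1$; every other chain contributes $O(n^{-1})$. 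Re-indexing by $m = l-1$ intermediate elements, the leading coefficient is
\begin{equation*}
\mathbf{1}_{\pi = \sigma} + \sum_{m \geq 0} (-1)^{m+1}\#\{(\rho_1,\ldots,\rho_m) : \pi < \rho_1 < \cdots < \rho_m < \sigma,\ \rho_j \in D(k)\},
\end{equation*}
which matches the definition of $\mu_{D(k)}(\pi,\sigma)$ in the excerpt, proving (2).

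\emph{Main obstacle.} The combinatorial heart of the argument is the triangle inequality for $\delta$, which reduces via submodularity to a lattice-theoretic fact about $P(k)$; once this is secured, both parts become bookkeeping of dominant versus subdominant terms in the convergent Neumann series. A subtle point worth flagging is that joins live in the ambient lattice $P(k)$ while chains run through $D(k)$, so that $\rho \vee \rho'$ for $\rho,\rho' \in D(k)$ need not lie in $D(k)$; this causes no trouble, since submodularity is a statement in $P(k)$ and $\mu_{D(k)}$ is defined via strict chains inside $D(k)$, which are precisely the chains selected by the leading-order calculation.
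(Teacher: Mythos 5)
Your proof is correct and takes essentially the same approach as the paper: the same $\Lambda^{1/2}(I+N)\Lambda^{1/2}$ factorization, the same Neumann series expansion, and the same use of semimodularity of the partition lattice; the only cosmetic difference is that you package the key inequality as a triangle inequality for the pseudo-metric $\delta(\pi,\sigma)=(|\pi|+|\sigma|)/2-|\pi\vee\sigma|$ and telescope, whereas the paper states the equivalent chain inequality $\sum_i|\nu_{i-1}\vee\nu_i|\leq|\pi\vee\sigma|+\sum_i|\nu_i|$ directly and proves it by induction. The identification of the leading coefficient with $\mu_{D(k)}(\pi,\sigma)$ in part (2) is also the same.
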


\begin{pf}
We use a standard method from~\cite{col,cos}, further developed in~\cite{bc1,bc2,cur1}.

First, note that
\[
G_{kn} = \Theta_{kn}^{1/2}(1 + B_{kn})\Theta_{kn}^{1/2},
\]
where
\begin{eqnarray*}
\Theta_{kn}(\pi,\sigma) &=&
\cases{\displaystyle
n^{|\pi|} ,&\quad $\pi= \sigma$,  \cr\displaystyle
0 ,&\quad $\pi\neq\sigma$,
}
\\
B_{kn}(\pi,\sigma) &=&
\cases{\displaystyle
0 ,&\quad $\pi= \sigma$,
\vspace*{2pt}\cr\displaystyle
n^{|\pi\vee\sigma| - \fracb{|\pi| + |\sigma|}{2}} ,&\quad $\pi\neq\sigma$.
}
\end{eqnarray*}
Note that the entries of $B_{kn}$ are $O(n^{-1/2})$, it follows that
for $n$ sufficiently large $1 + B_{kn}$ is invertible and
\[
(1 + B_{kn})^{-1} = 1 - B_{kn} + \sum_{l \geq1} (-1)^{l+1} B_{kn}^{l+1}.\vadjust{\goodbreak}
\]
$G_{kn}$ is then invertible, and
\begin{eqnarray*}
 W_{kn}(\pi,\sigma)  &=& \sum_{l \geq1} (-1)^{l+1} (\Theta
_{kn}^{-1/2}B_{kn}^{l+1}\Theta_{kn}^{-1/2})(\pi,\sigma)\\
&&{} +
\cases{\displaystyle n^{-|\pi|} ,&\quad $\pi= \sigma$,
\cr\displaystyle
-n^{|\pi\vee\sigma| -|\pi| - |\sigma|} ,&\quad $\pi\neq\sigma$.
}
\end{eqnarray*}
Now for $l \geq1$ we have
\begin{eqnarray*}
&&(\Theta_{kn}^{-1/2}B_{kn}^{l+1}\Theta_{kn}^{-1/2})(\pi,\sigma)\\
&& \qquad  =
\mathop{\mathop{\sum}_{\nu_1,\dotsc,\nu_l \in D(k)}}_{ \pi\neq\nu_1 \neq
\dotsb\neq\nu_l \neq\sigma} n^{|\pi\vee\nu_1| + |\nu_1 \vee
\nu_2| + \dotsb+ |\nu_l \vee\sigma| - |\nu_1| - \dotsb- |\nu_l|
- |\pi| - |\sigma|}.
\end{eqnarray*}
So to prove (1), it suffices to show that if $\nu_1,\dotsc,\nu_l \in
D(k)$, then
\[
|\pi\vee\nu_1| + |\nu_1 \vee\nu_2| + \dotsb+ |\nu_l \vee\sigma
| \leq|\pi\vee\sigma| + |\nu_1| + \dotsb+ |\nu_l|.
\]

We will use the fact that $P(k)$ is a \textit{semi-modular lattice}
(\cite{birk}, Section~I.8, Example~9):
If $\nu,\tau\in\eu{P}(k)$, then
\[
|\nu| + |\tau| \leq|\nu\vee\tau| + |\nu\wedge\tau|.
\]
We will now prove the claim by induction on $l$, for $l = 1$ we may
apply the formula above to find
\begin{eqnarray*}
|\pi\vee\nu| + |\nu\vee\sigma| &\leq&|(\pi\vee\nu) \vee(\nu
\vee\sigma)| + |(\pi\vee\nu) \wedge(\nu\vee\sigma)|\\
&\leq&|\pi\vee\sigma| + |\nu|.
\end{eqnarray*}
Now let $l > 1$, by induction we have
\[
|\pi\vee\nu_1| + |\nu_1 \vee\nu_2| + \dotsb+ |\nu_{l-1} \vee
\nu_l| \leq|\pi\vee\nu_l| + |\nu_1| + \dotsb+ |\nu_{l-1}|.
\]
Also $|\nu_l \vee\sigma| \leq|\pi\vee\sigma| + |\nu_l| - |\pi
\vee\nu_l|$, and the result follows.

To prove (2), suppose $\pi, \sigma\in D(k)$ and $\pi\leq\sigma$.
The terms which contribute to order $n^{-|\pi|}$ in the expansion come
from sequences $\nu_1,\dotsc,\nu_l \in D(k)$ such that $\pi\neq\nu
_1 \neq\dotsb\neq\nu_l \neq\sigma$ and
\[
|\pi\vee\nu_1| + \dotsb+ |\nu_l \vee\sigma| = |\sigma| + |\nu
_1| + \dotsb+ |\nu_l|.
\]
Since $|\pi\vee\nu_1| \leq|\nu_1|$, $|\nu_1 \vee\nu_2| \leq
|\nu_2|,\dotsc,|\nu_l \vee\sigma| \leq\sigma$, it follows that
each of these must be an equality, which implies $\pi< \nu_1 < \dotsb
< \nu_l < \sigma$. Conversely, any $\nu_1,\dotsc,\nu_l \in D(k)$
such that $\pi< \nu_1 < \dotsb< \nu_l < \sigma$ clearly satisfy
this equation. Therefore, the coefficient of $n^{-|\pi|}$ in
$W_{kn}(\pi,\sigma)$ is
\[
\cases{\displaystyle
1,  &\quad $\pi= \sigma$,\cr\displaystyle
-1 + \sum_{l=1}^\infty(-1)^{l+1} \#\{(\nu_1,\dotsc,\nu_l) \in
D(k)^l\dvtx  \pi< \nu_1 < \dotsb< \nu_l < \sigma\},  &\quad $\pi<
\sigma$,
}
\]
which is precisely $\mu_{D(k)}(\pi,\sigma)$.\vadjust{\goodbreak}
\end{pf}

Recall that the free, half-liberated and classical cumulants are
obtained from moment functionals by using the M\"{o}bius functions on
$\mathit{NC},E_h$ and $P$, respectively. To show that this is compatible with
Proposition~\ref{West}, we will need the following result.

\begin{proposition}\label{moebius}
{\baselineskip=0pt\begin{longlist}[(3)]
\item[(1)] If $D = \mathit{NC},\mathit{NC}_2,\mathit{NC}_b,\mathit{NC}_h$, then
\[
\mu_{D(k)}(\pi,\sigma) = \mu_{\mathit{NC}(k)}(\pi,\sigma)
\]
for all $\pi,\sigma\in D(k)$.
\item[(2)] If $D = E_h, E_2$, then
\[
\mu_{D(k)}(\pi,\sigma) = \mu_{ E_h(k)}(\pi,\sigma)
\]
for all $\pi,\sigma\in D(k)$.
\item[(3)] If $D = P, P_2, P_b, P_h$, then
\[
\mu_{D(k)}(\pi,\sigma) = \mu_{ P(k)}(\pi,\sigma)
\]
for all $\pi,\sigma\in D(k)$.
\end{longlist}}
\end{proposition}

\begin{pf}
Let $Q = \mathit{NC},E_h,P$ according to cases (1), (2), (3). It is easy to see
in each case that $D(k)$ is closed under taking intervals in $Q(k)$,
that is, if $\pi_1,\pi_2 \in D(k)$, $\sigma\in Q(k)$ and $\pi_1 <
\sigma< \pi_2$ then $\sigma\in D(k)$. The result now follows
immediately from the definition of the M\"{o}bius function.
\end{pf}

\section{Finite quantum invariant sequences}\label{sec:finite}
We begin this section by defining the notion of quantum invariance for
a sequence of noncommutative random variables under ``transformations''
coming from an orthogonal quantum group $G_n \subset O_n^+$.

Let $\ms P_n = \C\langle t_1,\dotsc,t_n \rangle$, and let $\alpha
_n\dvtx \ms P_n \to\ms P_n \otimes C(G_n)$ be the unique unital
homomorphism such that
\[
\alpha_n(t_j) = \sum_{i = 1}^n t_i \otimes u_{ij}.
\]
It is easily verified that $\alpha_n$ is an action of $G_n$, that is,
\[
(\mathrm{id} \otimes\Delta) \circ\alpha_n = (\alpha_n \otimes
\mathrm{id}) \circ\alpha_n
\]
and
\[
(\mathrm{id} \otimes\epsilon) \circ\alpha_n = \mathrm{id}.
\]

\begin{definition}
Let $(x_1,\dotsc,x_n)$ be a sequence of random variables in
a~noncommutative probability space $(\mc B,\varphi)$. We say that the
joint distribution of this sequence is \textit{invariant under $G_n$},
or that the sequence is \textit{$G_n$-invariant}, if the distribution
functional $\varphi_x\dvtx \ms P_n \to\C$ is invariant under the coaction
$\alpha_n$, that is,
\[
(\varphi_x \otimes\mathrm{id}) \alpha_n(p) = \varphi_x(p)
\]
for all $p \in\ms P_n$. More explicitly, the sequence $(x_1,\dotsc
,x_n)$ is $G_n$-invariant if
\[
\varphi(x_{j_1}\dotsb x_{j_k})1_{C(G_n)} = \sum_{1 \leq i_1,\dotsc
,i_k \leq n} \varphi(x_{i_1}\dotsb x_{i_k})u_{i_1j_1}\dotsb u_{i_kj_k}
\]
as an equality in $C(G_n)$, for all $k \in\N$ and $1 \leq j_1,\dotsc
,j_k \leq n$.
\end{definition}

\begin{remark}
Suppose that $G_n \subset O_n$ is a compact group. By evaluating both
sides of the above equation at $g \in G_n$, we see that a sequence
$(x_1,\dotsc,x_n)$ is $G_n$-invariant if and only if
\[
\varphi(x_{j_1}\dotsb x_{j_k}) = \sum_{1 \leq i_1,\dotsc,i_k \leq n}
g_{i_1j_1}\dotsb g_{i_kj_k}\varphi(x_{i_1}\dotsb x_{i_k})
\]
for each $k \in\N$, $1 \leq j_1,\dotsc,j_k \leq n$ and $g = (g_{ij})
\in G_n$, which coincides with the usual notion of $G_n$-invariance for
a sequence of classical random variables.
\end{remark}

We will now prove a converse to Theorem~\ref{definetti}, which holds
for finite sequences and in a purely algebraic context. The proof is
adapted from the method of~\cite{ksp}, Proposition 3.1.

\begin{proposition}\label{easydirection}
Let $(\mc A,\varphi)$ be a noncommutative probability space, $1 \in
\mc B \subset\mc A$ a unital subalgebra and $E\dvtx \mc A \to\mc B$ a
conditional expectation which preserves $\varphi$. Let $(x_1,\dotsc
,x_n)$ be a sequence in $\mc A$.
\begin{enumerate}[(3)]
\item[(1)] Free case:
\begin{longlist}[(b)]
\item[(a)] If $x_1,\dotsc,x_n$ are freely independent and identically
distributed with amalgamation over $\mc B$, then the sequence is
$S_n^+$-invariant.
\item[(b)] If $x_1,\dotsc,x_n$ are freely independent and identically
distributed with amalgamation over $\mc B$, and have even distributions
with respect to~$E$, then the sequence is $H_n^+$-invariant.
\item[(c)] If $x_1,\dotsc,x_n$ are freely independent and identically
distributed with amalgamation over $\mc B$, and have semicircular
distributions with respect to $E$, then the sequence is $B_n^+$-invariant.
\item[(d)] If $x_1,\dotsc,x_n$ are freely independent and identically
distributed with amalgamation over $\mc B$, and have centered
semicircular distributions with respect to $E$, then the sequence is
$O_n^+$-invariant.
\end{longlist}

\item[(2)] Half-liberated case: Suppose that $(x_1,\dotsc,x_n)$
half-commute, and that $\mc B$ is central in $\mc A$.
\begin{longlist}[(b)]
\item[(a)] If $x_1,\dotsc,x_n$ are half-independent and identically
distributed\break given~$\mc B$, then the sequence is $H_n^*$-invariant.
\item[(b)] If $x_1,\dotsc,x_n$ are half-independent and identically
distributed\break given~$\mc B$, and have symmetrized Rayleigh distributions
with respect to~$E$, then the sequence is $O_n^*$-invariant.
\end{longlist}

\item[(3)] Suppose that $\mc B$ and $x_1,\dotsc,x_n$ generate a commutative algebra.
\begin{longlist}[(b)]
\item[(a)] If $x_1,\dotsc,x_n$ are conditionally independent and
identically distributed given~$\mc B$, then the sequence is $S_n$-invariant.
\item[(b)] If $x_1,\dotsc,x_n$ are conditionally independent and
identically distributed given~$\mc B$, and have even distributions with
respect to $E$, then the sequence is $H_n$-invariant.
\item[(c)] If $x_1,\dotsc,x_n$ are conditionally independent and
identically distributed given~$\mc B$, and have Gaussian distributions
with respect to $E$, then the sequence is $B_n$-invariant.
\item[(d)] If $x_1,\dotsc,x_n$ are conditionally independent and
identically distributed given~$\mc B$, and have centered Gaussian
distributions with respect to $E$, then the sequence is $O_n$-invariant.
\end{longlist}
\end{enumerate}
\end{proposition}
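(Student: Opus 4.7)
The plan is to treat all ten cases uniformly. Denote by $D(k)$ the partition class corresponding to $G_n$: one of $NC, NC_h, NC_b, NC_2$ (free), $E_h, E_2$ (half-liberated), or $P, P_h, P_b, P_2$ (classical), according to the tables in Section \ref{sec:background}. The target identity
\begin{equation*}
\varphi(x_{j_1}\cdots x_{j_k})\cdot 1_{C(G_n)} = \sum_{1\leq i_1,\dotsc,i_k\leq n}\varphi(x_{i_1}\cdots x_{i_k})\,u_{i_1 j_1}\cdots u_{i_k j_k}
\end{equation*}
will be reduced to a statement about partitions in $D(k)$ and then closed using the intertwiner relations defining $G_n$ as an easy quantum group.

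The argument proceeds in three steps. First, the appropriate cumulant characterization, namely Theorem \ref{cumulantcharacterization} in the free and classical cases, and Theorem \ref{halfcumulantcharacterization} (together with Proposition \ref{halfgausscharacterization} for the $O^*$ case) in the half-liberated case, combined with the distributional hypothesis placed on $(x_1,\dotsc,x_n)$, forces the relevant cumulant (free $\kappa_E^{(\pi)}$, half-liberated $\gamma_E^{(\pi)}$, or classical $c_E^{(\pi)}$) of $(x_{i_1},\dotsc,x_{i_k})$ to vanish unless $\pi\in D(k)$ and $\pi\leq\ker\mathbf i$. Second, the identically distributed hypothesis ensures that for $\pi\in D(k)$ with $\pi\leq\ker\mathbf i$, the cumulant $\kappa_E^{(\pi)}[x_{i_1},\dotsc,x_{i_k}]$ factors through the block structure of $\pi$ as a nested expression built from single-index cumulants $\kappa_E^{(|V|)}[x_i,\dotsc,x_i]$, each of which is independent of the common value $i$; consequently $c_\pi := \varphi\bigl(\kappa_E^{(\pi)}[x_{i_1},\dotsc,x_{i_k}]\bigr)$ depends only on $\pi$, and the moment-cumulant formula gives
\begin{equation*}
\varphi(x_{i_1}\cdots x_{i_k}) = \sum_{\substack{\pi\in D(k)\\ \pi\leq\ker\mathbf i}} c_\pi.
\end{equation*}
Third, since $G_n$ is easy with partition category $D$, each $T_\pi$ with $\pi\in D(k)$ lies in $\Hom(1,u^{\otimes k})$; unpacking $u^{\otimes k}T_\pi(1)=T_\pi(1)$ in the standard basis of $(\C^n)^{\otimes k}$ yields
\begin{equation*}
\sum_{\mathbf i:\,\pi\leq\ker\mathbf i} u_{i_1 j_1}\cdots u_{i_k j_k} = \mathbf 1_{\{\pi\leq\ker\mathbf j\}}\cdot 1_{C(G_n)}
\end{equation*}
in $C(G_n)$. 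Substituting the formula for $\varphi(x_{i_1}\cdots x_{i_k})$ into the right-hand side of the target identity and interchanging the order of summation collapses the expression to $\sum_{\pi\in D(k),\,\pi\leq\ker\mathbf j} c_\pi = \varphi(x_{j_1}\cdots x_{j_k})$, proving $G_n$-invariance.

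The ten cases differ only in the choice of $D(k)$ and in which cumulant theory is invoked; in the half-liberated cases one additionally notes that the moment-cumulant formula is compatible with the relations $x_ix_jx_k=x_kx_jx_i$, because $E_h$ is stable under parity-preserving permutations, so both sides of the formula for $\varphi(x_{i_1}\cdots x_{i_k})$ enjoy the same symmetry as the monomial. I expect the main obstacle to be the identically-distributed reduction in the operator-valued settings: the cumulants take values in $\mc B$, and the recursive nesting that defines $\kappa_E^{(\pi)}$ (and $\gamma_E^{(\pi)}$) interleaves $\mc B$-elements between the variables, so one must verify carefully that, once all indices within each block of $\pi$ are forced to agree, the resulting nested $\mc B$-valued expression really is an invariant of $\pi$ alone and not of the particular index assignment $\mathbf i$. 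Once this is in hand, the rest of the proof is a mechanical combinatorial manipulation driven entirely by the intertwiner identity for $T_\pi$.
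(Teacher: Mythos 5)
Your proposal is correct and follows essentially the same route as the paper's proof: expand $\varphi(x_{i_1}\cdots x_{i_k})$ via $\varphi\circ E$ and the relevant moment–cumulant formula, use the identical-distribution and cumulant-vanishing hypotheses to reduce to a sum of $\pi$-dependent constants over $\pi\in D(k)$ with $\pi\leq\ker\mathbf{i}$, interchange sums, and invoke the intertwiner identity $\sum_{\pi\leq\ker\mathbf{i}}u_{i_1j_1}\cdots u_{i_kj_k}=\mathbf{1}_{\{\pi\leq\ker\mathbf{j}\}}1_{C(G_n)}$ to collapse back to $\varphi(x_{j_1}\cdots x_{j_k})$. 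The concern you flag about the $\mc B$-valued nesting is not an issue here because the target identity involves no interleaved $\mc B$-coefficients: once $\pi\leq\ker\mathbf{i}$ forces constant indices on each block, the nested cumulant is built entirely from single-variable block cumulants of $x_i$, whose common $\mc B$-valued distribution makes $\xi_E^{(\pi)}[x_{i_1},\dotsc,x_{i_k}]=\xi_E^{(\pi)}[x_1,\dotsc,x_1]$, exactly as the paper writes.
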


\begin{pf}
Suppose that the joint distribution of $(x_1,\dotsc,x_n)$ satisfies
one of the conditions specified in the statement of the proposition,
and let $D$ be the partition family associated to the corresponding
easy quantum group. By Propositions~\ref{cumulantcharacterization} and
\ref{halfcumulantcharacterization}, and the moment-cumulant formulae,
for any $k \in\N$ and $1 \leq j_1,\dotsc,j_k \leq n$ we have
\begin{eqnarray*}
 &&\sum_{1 \leq i_1,\dotsc,i_k \leq n} \varphi(x_{i_1}\dotsb x_{i_k})
u_{i_1j_1}\dotsb u_{i_kj_k}\\
 && \qquad = \sum_{1 \leq i_1,\dotsc,i_k \leq n}
\varphi(E[x_{j_1}\dotsb x_{j_k}])u_{i_1j_1}\dotsb u_{i_kj_k}\\
 && \qquad = \sum_{1 \leq i_1,\dotsc,i_k \leq n} \mathop{\mathop{\sum}_{\pi\in
D(k)}}_{ \pi\leq\ker\mathbf i} \varphi\bigl(\xi^{(\pi)}_E[x_1,\dotsc
,x_1]\bigr) u_{i_1j_1}\dotsb u_{i_kj_k}\\
 && \qquad = \sum_{\pi\in D(k)} \varphi\bigl(\xi^{(\pi)}_E[x_1,\dotsc,x_1]\bigr)
\mathop{\mathop{\sum}_{1 \leq i_1,\dotsc,i_k \leq n }}_{ \pi\leq\ker\mathbf i}
u_{i_1j_1}\dotsb u_{i_kj_k},
\end{eqnarray*}
where $\xi$ denotes the free, half-liberated or classical cumulants in
cases (1), (2) and (3), respectively. It follows from the
considerations in~\cite{bsp}, or by direct computation, that if $\pi
\in D(k)$ then
\[
\mathop{\mathop{\sum}_{1 \leq i_1,\dotsc,i_k \leq n }}_{ \pi\leq\ker
\mathbf i} u_{i_1j_1}\dotsb u_{i_kj_k} =
\cases{\displaystyle
1_{C(G_n)},  &\quad $\pi\leq\ker\mathbf j$,\cr\displaystyle 0,  &\quad
otherwise.
}
\]
Applying this above, we find
\begin{eqnarray*}
\sum_{  1 \leq i_1,\dotsc,i_k \leq n} \varphi
(x_{i_1}\dotsb x_{i_k})u_{i_1j_1}\dotsb u_{i_kj_k} &=&
\mathop{\mathop{\sum}_{\pi\in D(k)}}_{ \pi\leq\ker\mathbf j} \varphi\bigl(\xi_E^{(\pi
)}[x_1,\dotsc,x_1]\bigr)1_{C(G_n)}\\
&=& \varphi(x_{j_1}\dotsb x_{j_k})1_{C(G_n)},
\end{eqnarray*}
which completes the proof.
\end{pf}

\begin{remark}
To prove the approximation result for finite sequences, we will require
more analytic structure. Throughout the rest of the section, we will
assume that $G_n \subset O_n^+$ is a compact quantum group, $(M,\varphi
)$ is a W$^*$-probability space and $(x_1,\dotsc,x_n)$ is a sequence
of self-adjoint random variables in $M$. We denote the von Neumann
algebra generated by $(x_1,\dotsc,x_n)$ by $M_n$, and define the
\textit{$G_n$-invariant subalgebra} by
\[
\mc B_n = \mathrm{W}^* \bigl(\{p(x)\dvtx  p \in\ms P_n^{\alpha_n}\} \bigr),
\]
where $\ms P_n^{\alpha_n}$ denotes the fixed point algebra of the
action $\alpha_n$, that is,
\[
\ms P_n^{\alpha_n} =\bigl \{p \in\ms P_n\dvtx  \alpha_n(p) = p \otimes
1_{C(G_n)}\bigr\}.
\]
\end{remark}

We now begin the technical preparations for our approximation result.
First, we will need to extend the action $\alpha_n$ to the von Neumann
algebra context. $L^\infty(G_n)$ will denote the von Neumann algebra
obtained by taking the weak closure of $\pi_n(C(G_n))$, where $\pi_n$
is the GNS representation of $C(G_n)$ on the GNS Hilbert space
$L^2(G_n)$ for the Haar state. $L^\infty(G_n)$ is a \textit{Hopf von
Neumann algebra}, with the natural structure induced from $C(G_n)$.
\begin{proposition}\label{coaction}
Suppose that $(x_1,\dotsc,x_n)$ is $G_n$-invariant. Then there is a
right coaction $\widetilde\alpha_n\dvtx  M_n \to M_n \otimes L^\infty
(G_n)$ determined by
\[
\widetilde\alpha_n (p(x)) = (\mathrm{ev}_x \otimes\pi_n) \alpha_n(p)
\]
for $p \in\ms P_n$. Moreover, the fixed point algebra of $\widetilde
\alpha_n$ is precisely the $G_n$-invariant subalgebra $\mc B_n$.
\end{proposition}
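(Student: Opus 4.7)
The plan is to build $\widetilde\alpha_n$ in three stages: verify well-definedness and state-preservation on the polynomial subalgebra, extend by normality to $M_n$, and identify the fixed-point algebra by averaging over the Haar state.

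First I would consider the $\ast$-homomorphism
\[
\widetilde\alpha_n^0 := (\mathrm{ev}_x \otimes \pi_n) \circ \alpha_n : \ms P_n \to M_n \otimes L^\infty(G_n).
\]
The $G_n$-invariance hypothesis gives $(\varphi_x \otimes \mathrm{id})\alpha_n(p) = \varphi_x(p) \cdot 1_{C(G_n)}$ on $\ms P_n$, so applying $\int$ yields $(\varphi \otimes \int)\widetilde\alpha_n^0(p) = \varphi_x(p)$. Thus if $p(x) = 0$ in $M_n$, then $\varphi_x(p^{\ast}p) = 0$, hence $(\varphi \otimes \int)(\widetilde\alpha_n^0(p)^{\ast}\widetilde\alpha_n^0(p)) = 0$, and faithfulness of $\varphi \otimes \int$ on the W$^\ast$-tensor product $M_n \otimes L^\infty(G_n)$ forces $\widetilde\alpha_n^0(p) = 0$. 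Hence $\widetilde\alpha_n^0$ descends to a well-defined state-preserving $\ast$-homomorphism $\bar\alpha$ on the ultraweakly dense $\ast$-subalgebra $\mathrm{ev}_x(\ms P_n) \subset M_n$.

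Second, because $\bar\alpha$ preserves the faithful normal states $\varphi$ and $\varphi \otimes \int$, it admits a unique normal extension $\widetilde\alpha_n : M_n \to M_n \otimes L^\infty(G_n)$. Concretely, define an isometry $V: L^2(M_n,\varphi) \to L^2(M_n,\varphi) \otimes L^2(G_n)$ by $V(a \hat 1_\varphi) = \bar\alpha(a)(\hat 1_\varphi \otimes \Omega)$ on the dense subspace $\mathrm{ev}_x(\ms P_n)\hat 1_\varphi$, where $\hat 1_\varphi$ and $\Omega$ are the canonical cyclic GNS vectors for $\varphi$ and the Haar state; state-preservation gives the isometry property, and the intertwining $Va = \bar\alpha(a)V$ on this subspace promotes via Kaplansky density to the desired normal extension. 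The coaction identity $(\mathrm{id} \otimes \Delta)\widetilde\alpha_n = (\widetilde\alpha_n \otimes \mathrm{id})\widetilde\alpha_n$ holds on the polynomial level from the corresponding property of $\alpha_n$ and extends by normality. The main obstacle lies here: one must verify that the extension truly lands in the W$^\ast$-tensor product $M_n \otimes L^\infty(G_n)$ rather than merely in $B(L^2(M_n) \otimes L^2(G_n))$, which relies on the fact that $\bar\alpha$ initially takes values in the algebraic tensor product $M_n \otimes_{\mathrm{alg}} \pi_n(C(G_n))$ together with standard normal-extension results for state-preserving $\ast$-homomorphisms.

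Third, for the fixed-point identification, the inclusion $\mc B_n \subset M_n^{\widetilde\alpha_n}$ is immediate: for $p \in \ms P_n^{\alpha_n}$ the formula yields $\widetilde\alpha_n(p(x)) = p(x) \otimes 1$, and the fixed-point set is a von Neumann subalgebra. For the reverse, introduce the normal conditional expectation $E_n := (\mathrm{id} \otimes \int)\widetilde\alpha_n : M_n \to M_n^{\widetilde\alpha_n}$. For $p \in \ms P_n$, set $q := (\mathrm{id} \otimes \int)\alpha_n(p) \in \ms P_n$; left invariance of $\int$ gives $\alpha_n(q) = q \otimes 1$, so $E_n(p(x)) = q(x) \in \mc B_n$. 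Normality of $E_n$ together with ultraweak density of $\mathrm{ev}_x(\ms P_n)$ in $M_n$ force $E_n(M_n) \subset \mc B_n$, and since $E_n$ restricts to the identity on the fixed-point algebra, we conclude $M_n^{\widetilde\alpha_n} = \mc B_n$.
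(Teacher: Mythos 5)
The paper's own proof is a one-line citation to \cite[Theorem 3.3]{cur1}, and your argument is a faithful reconstruction of the standard GNS-extension and Haar-averaging construction underlying that cited result, so the two coincide in approach. One cosmetic remark: in Step 3 the identity $\alpha_n\bigl((\mathrm{id}\otimes\int)\alpha_n(p)\bigr) = q\otimes 1$ uses the \emph{right} invariance $(\mathrm{id}\otimes\int)\Delta = \int(\,\cdot\,)1$ rather than left invariance, though since the Haar state on a compact quantum group is two-sided invariant this is immaterial.
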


\begin{pf}
This follows from~\cite{cur1}, Theorem 3.3, after identifying the GNS
representation of $\ms P_n$ for the state $\varphi_x$ with the
homomorphism \mbox{$\mathrm{ev}_x\dvtx \ms P_n \to M_n$}.
\end{pf}

There is a natural conditional expectation $E_n\dvtx M_n \to\mc B_n$ given
by integrating the coaction $\widetilde\alpha_n$ with respect to the
Haar state, that is,
\[
E_n[m] = \biggl(\mathrm{id} \otimes\int\biggr) \widetilde\alpha_n(m).
\]
By using the Weingarten calculus, we can give a simple combinatorial
formula for the moment functionals with\vadjust{\goodbreak} respect to $E_n$ if $G_n$ is
one of the easy quantum groups under consideration. In the
half-liberated case, we must first show that $\mc B_n$ is central in
$M_n$.\vspace*{-2pt}

\begin{lemma}
Suppose that $(x_1,\dotsc,x_n)$ half-commute. If $H_n^* \subset G_n$,
then the $G_n$-invariant subalgebra $\mc B_n$ is contained in the
center of $M_n$.\vspace*{-2pt}
\end{lemma}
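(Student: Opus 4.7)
The plan is to reduce the claim to $G_n = H_n^*$ and then exploit the explicit combinatorial description of $H_n^*$-invariants provided by easiness. Since $H_n^* \subset G_n$ corresponds to a surjective Hopf $*$-homomorphism $C(G_n) \twoheadrightarrow C(H_n^*)$, the $H_n^*$-coaction on $\ms P_n$ factors through $\alpha_n$, so every $G_n$-invariant polynomial is automatically $H_n^*$-invariant; hence $\mc B_n \subset \mc B_n^{H_n^*}$ and it is enough to show $\mc B_n^{H_n^*}$ is central in $M_n$.

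Next, by easiness of $H_n^*$, the fixed vectors of $u^{\otimes k}$ in $(\C^n)^{\otimes k}$ are spanned by $\xi_\pi = \sum_{\pi \leq \ker \mathbf i} e_{i_1} \otimes \cdots \otimes e_{i_k}$ for $\pi \in E_h(k)$, so the fixed polynomial subspace $\ms P_n^{\alpha_n^{H_n^*}}$ is linearly spanned by the balanced sums
\begin{equation*}
p_\pi(t) = \sum_{\pi \leq \ker \mathbf i} t_{i_1} \cdots t_{i_k}, \qquad \pi \in E_h(k).
\end{equation*}
Commuting with a fixed generator $x_l$ is a weak-operator closed condition and is preserved under products, so it suffices to show that each $p_\pi(x)$ commutes with every $x_l$.

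This is the heart of the argument. Fix $\pi \in E_h(k)$ with blocks $V_1, \dotsc, V_r$; balancedness forces $|V_l| = 2 s_l$ with exactly $s_l$ odd and $s_l$ even positions in $V_l$. Write $p_\pi(x) = \sum_{\mathbf i} m_\pi(\mathbf i)$, where $m_\pi(\mathbf i)$ is the monomial placing $x_{i_l}$ at every position of $V_l$. Because the odd and even position counts within each block match, I would choose a parity-preserving permutation $\omega \in S_k$ that sends each $V_l$ bijectively onto the consecutive interval $\{2\Sigma_{l-1}+1, \dotsc, 2\Sigma_l\}$, where $\Sigma_l = s_1 + \cdots + s_l$. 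Parity-preserving permutations are generated by the transpositions $(i\; i+2)$, each implemented inside $M_n$ by the half-commutation $x_a x_b x_c = x_c x_b x_a$, so applying $\omega$ rewrites
\begin{equation*}
m_\pi(\mathbf i) = x_{i_1}^{2 s_1} x_{i_2}^{2 s_2} \cdots x_{i_r}^{2 s_r}.
\end{equation*}
Specializing the half-commutation relation to $j = i$ yields $x_i^2 x_l = x_l x_i^2$ for all $i,l$, so every factor $(x_{i_l}^2)^{s_l}$ is central; thus $m_\pi(\mathbf i)$, and therefore $p_\pi(x)$, lies in the center of $M_n$.

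The main obstacle will be justifying the rearrangement step cleanly: one has to verify both that a parity-preserving $\omega$ with the required block-gathering property exists (this is exactly where balancedness is used, since clustering a block requires matched odd/even counts) and that the sequence of half-commutations implementing $\omega$ really does produce the claimed product-of-squares form. Fortunately the centrality of the individual squares $x_{i_l}^2$ makes the product independent of the order in which the $(i\; i+2)$ transpositions are applied, so one need not track the specific decomposition of $\omega$.
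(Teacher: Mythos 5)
Your proposal is correct and follows essentially the same route as the paper: reduce to $G_n = H_n^*$, identify the fixed points of $\alpha_n$ with the balanced sums $p_\pi$ for $\pi\in E_h(k)$ via easiness, use a parity-preserving permutation (implemented by half-commutations) to rearrange each balanced monomial into a product of even powers, and conclude from centrality of $x_i^2$. The only cosmetic difference is that you argue monomial by monomial while the paper factors the whole sum $p_{\omega(\pi)}(x)$ as a product of power sums; both hinge on exactly the same two observations.
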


\begin{pf}
Since the $G_n$-invariant subalgebra is clearly contained in the
$H_n^*$-invariant subalgebra, it suffices to prove the result for $G_n
= H_n^*$. Observe that the representation of $G_n$ on the subspace of
$\ms P_n$ consisting of homogeneous noncommutative polynomials of
degree $k$, given by the restriction of $\alpha_n$, is naturally
identified with $u^{\otimes k}$, where $u$ is the fundamental
representation of $G_n$. As discussed in Section~\ref{sec:background},
$\mathrm{Fix}(u^{\otimes k})$ is spanned by the operators $T_\pi$ for
$\pi\in E_h(k)$. It follows that the fixed point algebra of $\alpha
_n$ is spanned by
\[
p_{\pi} = \mathop{\mathop{\sum}_{1 \leq i_1,\dotsc,i_k \leq n}}_{ \pi\leq
\ker\mathbf i} t_{i_1}\dotsb t_{i_k}\vspace*{-1pt}
\]
for $k \in\N$ and $\pi\in E_h(k)$. Therefore, $\mc B_n$ is generated
by $p_\pi(x)$, for $k \in\N$ and $\pi\in E_h(k)$. Recall from
Section~\ref{sec:halfindependence} that if $\omega\in S_k$ is a
parity preserving permutation, then $x_{i_1}\dotsb x_{i_k} =
x_{i_{\omega(1)}}\dotsb x_{i_{\omega(k)}}$ for any $1 \leq i_1,\dotsc
,i_k \leq n$. It follows that $p_{\pi}(x) = p_{\omega(\pi)}(x)$,
where $\omega(\pi)$ is given by the usual action of permutations on
set partitions. Now if $\pi\in E_h(k)$, it is easy to see that there
is a parity preserving permutation $\omega\in S_k$ such that
\[
\omega(\pi) =  \bigl\{(1,\dotsc,2k_1),\dotsc, \bigl(2(k_1 + \dotsb+
k_{l-1})+1,\dotsc,2(k_1+\dotsc+ k_l)\bigr) \bigr\}\vspace*{-1pt}
\]
is an interval partition. We then have
\[
p_{\pi}(x) = p_{\omega(\pi)}(x) =  \Biggl(\sum_{i_1=1}^n
x_{i_1}^{2k_1} \Biggr)\dotsb \Biggl(\sum_{i_l=1}^n
x_{i_l}^{2k_l} \Biggr).
\]
Since $x_i^2$ is central in $M_n$ for $1 \leq i \leq n$, the result follows.\vspace*{-2pt}
\end{pf}

\begin{proposition}\label{cumulants}
Suppose that $(x_1,\dotsc,x_n)$ is $G_n$-invariant, and that one of
the following conditions is satisfied:\vspace*{-1pt}
\begin{longlist}[(3)]
\item[(1)]$G_n$ is a free quantum group $O_n^+,S_n^+,H_n^+$ or $B_n^+$.
\item[(2)]$G_n$ is a half-liberated quantum group $O_n^*$ or $H_n^*$ and
$(x_1,\dotsc,x_n)$ half-commute.
\item[(3)]$G_n$ is an easy group $O_n,S_n,H_n$ or $B_n$ and $(x_1,\dotsc
,x_n)$ commute.\vspace*{-1pt}
\end{longlist}
Then for any $\pi$ in the partition category $D(k)$ for the easy
quantum group~$G_n$, and any $b_0,\dotsc,b_k \in\mc B_n$, we have
\[
E_n^{(\pi)}[b_0x_1b_1,\dotsc,x_1b_k] = \frac{1}{n^{|\pi|}}
\mathop{\mathop{\sum}_{1 \leq i_1,\dotsc,i_k \leq n}}_{ \pi\leq\ker\mathbf i}
b_0x_{i_1}\dotsb x_{i_k}b_k,\vspace*{-1pt}
\]
which holds if $n$ is sufficiently large that the Gram matrix $G_{kn}$
is invertible.\vadjust{\goodbreak}
\end{proposition}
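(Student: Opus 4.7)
The plan is to reduce the formula for general $\pi$ to the single-block case $\pi = 1_s$ (for $s$ such that $1_s \in D(s)$; this holds for every block size $s$ occurring in $D$), which I prove by a direct Weingarten computation. Applying $E_n = (\mathrm{id}\otimes\int)\widetilde\alpha_n$ to $b_0 x_1 b_1 \cdots x_1 b_s$, and using that each $b_j \in \mc B_n$ is fixed by $\widetilde\alpha_n$ while $\widetilde\alpha_n(x_1) = \sum_i x_i \otimes \pi_n(u_{i1})$, I would expand
\[
E_n[b_0 x_1 b_1 \cdots x_1 b_s] = \sum_{i_1,\dots,i_s} b_0 x_{i_1} b_1 \cdots x_{i_s} b_s \cdot \textstyle\int u_{i_1 1} \cdots u_{i_s 1}.
\]
Theorem \ref{weingarten} with $\mathbf j = (1,\dots,1)$ (so $\ker\mathbf j = 1_s$, making the constraint $\sigma \le \ker\mathbf j$ automatic) rewrites the right side as $\sum_{\tau \in D(s)} T_\tau \cdot \bigl(\sum_{\sigma \in D(s)} W_{sn}(\tau,\sigma)\bigr)$, where $T_\tau := \sum_{\tau \le \ker\mathbf i} b_0 x_{i_1}b_1 \cdots x_{i_s}b_s$.

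The crucial identity is
\[
\sum_{\sigma \in D(s)} W_{sn}(\tau,\sigma) \;=\; \frac{1}{n}\,\delta_{\tau,1_s},
\]
which is the $(\tau,1_s)$-entry of $W_{sn}G_{sn} = I$ combined with the observation that the column $G_{sn}(\sigma,1_s) = n^{|\sigma \vee 1_s|} = n$ is constant in $\sigma$. Substituting gives $E_n[b_0 x_1 b_1 \cdots x_1 b_s] = n^{-1}\sum_i b_0 x_i b_1 \cdots x_i b_s$, settling the single-block case.

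For general $\pi \in D(k)$ with $|\pi| > 1$, I would induct on $|\pi|$, splitting by case. In the free case, pick an interval block $V = \{l+1,\dots,l+s\}$ of $\pi \in NC(k)$; the recursive definition of $E_n^{(\pi)}$ collapses $V$ into the $\mc B_n$-valued factor $E_n^{(s)}(a_{l+1},\dots,a_{l+s})$, which by the base case equals $n^{-1}\sum_{j_V} x_{j_V}b_{l+1}\cdots x_{j_V}b_{l+s}$. This factor is multiplied into an adjacent argument, preserving the ``$x_1$ times $\mc B_n$'' form of the sequence (with $b_0$ serving as a left buffer if $V$ sits at position 1), and the induction hypothesis applied to $\pi \setminus V \in D(k-s)$ finishes the free case.

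In the classical and half-liberated cases, $E_n^{(\pi)} = \prod_{V \in \pi} E_n^{(|V|)}(V)$ is a well-defined product of central elements of $\mc B_n$, and each factor is given by the base case, yielding
\[
E_n^{(\pi)}[\dots] = \frac{1}{n^{|\pi|}} \sum_{\mathbf j = (j_V)_V} \prod_{V \in \pi} \bigl(x_{j_V} b_{v_1^V} x_{j_V} b_{v_2^V} \cdots x_{j_V} b_{v_s^V}\bigr),
\]
with $V = \{v_1^V < \cdots < v_s^V\}$. The principal obstacle is to identify this block-ordered product with the interlaced expression $b_0 x_{i_1} b_1 \cdots x_{i_k} b_k$ (where $i_j = j_V$ for $j \in V$). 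In the classical case commutativity handles this immediately; in the half-liberated case, centrality of $\mc B_n$ (by the preceding lemma) lets the $b_j$'s slide freely, reducing matters to the $x$-subsequence. Since every $V \in E_h(k)$ is balanced, each $x_{j_V}$ occupies exactly $|V|/2$ odd and $|V|/2$ even positions in both orderings, so the two $x$-sequences are conjugate by a parity-preserving permutation; and the half-commutation relation $x_a x_b x_c = x_c x_b x_a$ --- realizing the transposition $(i,i+2)$ when applied at positions $(i,i+1,i+2)$ of a longer product --- generates precisely this subgroup, completing the identification.
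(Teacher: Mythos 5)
Your proof is correct and follows essentially the same route as the paper's: the base case $\pi = 1_s$ via the Weingarten formula and the row-sum identity $\sum_\sigma W_{sn}(\tau,\sigma) = n^{-1}\delta_{\tau,1_s}$, then the free case by induction on $|\pi|$ using the nested definition of $E_n^{(\pi)}$ applied to an interval block (checking $1_s \in D(s)$ and $\pi\setminus V \in D(k-s)$), and the classical/half-liberated cases via the commutative product formula for $E_n^{(\pi)}$ together with centrality of $\mc B_n$. Your treatment of the half-liberated reordering via parity-preserving permutations is simply a more explicit rendering of the step the paper summarizes by citing the preceding lemma.
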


\begin{pf}
We prove this by induction on the number of blocks of $\pi$. First,
suppose that $\pi= 1_k$ is the partition with only one block. Then
\begin{eqnarray*}
E_n^{(1_k)}[b_0x_1b_1,\dotsc,x_1b_k] &=& E_n[b_0x_1\dotsb x_1b_k]\\[-3pt]
&=& \sum_{1 \leq i_1,\dotsc,i_k \leq n } b_0x_{i_1}\dotsb x_{i_k}b_k
\int u_{i_11}\dotsb u_{i_k1},\vspace*{-1pt}
\end{eqnarray*}
where we have used the fact that $b_0,\dotsc,b_k$ are fixed by the
coaction $\widetilde\alpha_n$. Applying the Weingarten integration
formula in Proposition~\ref{weingarten}, we have
\begin{eqnarray*}
E_n[b_0x_1\dotsb x_1b_k] &=& \sum_{1 \leq i_1,\dotsc,i_k \leq n}
b_0x_{i_1}\dotsb x_{i_k}b_k \mathop{\mathop{\sum}_{\sigma,\pi\in D(k)}}_{
\pi\leq\ker\mathbf i} W_{kn}(\pi,\sigma)\\[-3pt]
&=& \sum_{\pi\in D(k)}  \biggl(\sum_{\sigma\in D(k)} W_{kn}(\pi
,\sigma) \biggr) \mathop{\mathop{\sum}_{1 \leq i_1,\dotsc,i_k \leq n }}_{
\pi\leq\ker\mathbf i} b_0x_{i_1}\dotsb x_{i_k}b_k.\vspace*{-1pt}
\end{eqnarray*}
Observe that $G_{kn}(\sigma,1_k) = n^{|\sigma\vee1_k|} = n$ for any
$\sigma\in D(k)$. It follows that for any $\pi\in D(k)$, we have
\begin{eqnarray*}
n \cdot\sum_{\sigma\in D(k)} W_{kn}(\pi,\sigma) &=& \sum_{\sigma
\in D(k)} W_{kn}(\pi,\sigma)G_{kn}(\sigma,1_k)\\[-3pt]
&=& \delta_{\pi1_k}.\vspace*{-1pt}
\end{eqnarray*}
Applying this above, we find
\begin{eqnarray*}
E_n[b_0x_1\dotsb x_1b_k] &=& \sum_{\pi\in D(k)} n^{-1}\delta_{\pi
1_k} \mathop{\mathop{\sum}_{1 \leq i_1,\dotsc,i_k \leq n}}_{ \pi\leq\ker
\mathbf i} b_0x_{i_1}\dotsb x_{i_k}b_k\\[-3pt]
&=& \frac{1}{n}\sum_{i=1}^n b_0x_i\dotsb x_ib_k,\vspace*{-1pt}
\end{eqnarray*}
as desired.

If condition (2) or (3) are satisfied, then the general case follows
from the formula
\[
E_n^{(\pi)}[b_0x_1b_1,\dotsc,x_1b_k] = b_1\dotsb b_k \prod_{V \in
\pi} E_n(V)[x_1,\dotsc,x_1],\vspace*{-1pt}
\]
where in the half-liberated case we are applying the previous lemma.
The one thing we must check here is that if $\pi\in D(k)$ and $V$ is a
block of $\pi$ with $s$ elements, then $1_s \in D(s)$. This is easily
verified, in each case, for $D = P,P_2,P_h,P_b,E_h,E_2$.

Suppose now that condition (1) is satisfied. Let $\pi\in D(k)$. Since
$\pi$ is noncrossing, $\pi$ contains an interval $V = \{l+1,\dotsc
,l+s+1\}$. We then have
\begin{eqnarray*}
&&E_n^{(\pi)}[b_0x_1b_1,\dotsc,x_1b_k]\\[-3pt]
&& \qquad  = E_n^{(\pi\setminus
V)}[b_0x_1b_1,\dotsc,E_n[x_1b_{l+1}\dotsb x_1b_{l+s}]x_1,\dotsc,x_1b_k].\vspace*{-1pt}\vadjust{\goodbreak}
\end{eqnarray*}
To apply induction, we must check that $\pi\setminus V \in D(k-s)$ and
$1_s \in D(s)$. Indeed, this is easily verified for $\mathit{NC},\mathit{NC}_2,\mathit{NC}_h$ and
$\mathit{NC}_b$. Applying induction, we have
\begin{eqnarray*}
&&E_n^{(\pi)}[b_0x_1b_1,\dotsc,x_1b_k]\\[-3pt]
&& \qquad = \frac{1}{n^{|\pi|-1}}\mathop{\mathop{\mathop{\sum}_
{1 \leq i_1,\dotsc,i_l,}}_{ i_{l+s+1},\dotsc,i_k \leq n}}_{
(\pi\setminus V) \leq\ker\mathbf i} b_0x_{i_1}\dotsb b_l
(E_n[x_1b_{l+1}\dotsb x_1b_{l+s}] )x_{i_{l+s}}\dotsb x_{i_k}b_k\\[-3pt]
&& \qquad = \frac{1}{n^{|\pi|-1}}\mathop{\mathop{\mathop{\sum}_{1 \leq
i_1,\dotsc,i_l,}}_{
i_{l+s+1},\dotsc,i_k \leq n}}_{ (\pi\setminus V) \leq\ker\mathbf i}
b_0x_{i_1}\dotsb b_l  \Biggl(\frac{1}{n}\sum_{i=1}^n x_ib_{l+1}\dotsb
x_{i}b_{l+s} \Biggr)x_{i_{l+s}}\dotsb x_{i_k}b_k\\[-3pt]
&& \qquad = \frac{1}{n^{|\pi|}}\mathop{\mathop{\sum}_{1 \leq i_1,\dotsc, i_k \leq
n}}_{ \pi\leq\ker\mathbf i} b_0x_{i_1}\dotsb x_{i_k}b_k,
\end{eqnarray*}
which completes the proof.\vspace*{-2pt}
\end{pf}

We are now prepared to prove the approximation result for finite sequences.\vspace*{-2pt}

\begin{theorem}\label{approx}
Suppose that $(x_1,\dotsc,x_n)$ is $G_n$-invariant, and that one of
the following conditions is satisfied:\vspace*{-1pt}
\begin{longlist}[(3)]
\item[(1)]$G_n$ is a free quantum group $O_n^+,S_n^+,H_n^+$ or $B_n^+$.
\item[(2)]$G_n$ is a half-liberated quantum group $O_n^*$ or $H_n^*$ and
$(x_1,\dotsc,x_n)$ half-commute.
\item[(3)]$G_n$ is an easy group $O_n,S_n,H_n$ or $B_n$ and $(x_1,\dotsc
,x_n)$ commute.\vspace*{-1pt}
\end{longlist}
Let $(y_1,\dotsc,y_n)$ be a sequence of $\mc B_n$-valued random
variables with $\mc B_n$-valued joint distribution determined as follows:\vspace*{-1pt}
\begin{itemize}
\item$G = O^+$: Free semicircular, centered with same variance as $x_1$.
\item$G = S^+$: Freely independent, $y_i$ has same distribution as $x_1$.
\item$G = H^+$: Freely independent, $y_i$ has same distribution as $x_1$.
\item$G = B^+$: Free semicircular, same mean and variance as $x_1$.
\item$G = O^*$: Half-liberated Gaussian, same variance as $x_1$.
\item$G = H^*$: Half-independent, $y_i$ has same distribution as $x_1$.
\item$G = O$: Independent Gaussian, centered with same variance as $x_1$.
\item$G = S$: Independent, $y_i$ has same distribution as $x_1$.
\item$G = H$: Independent, $y_i$ has same distribution as $x_1$.
\item$G = B$: Independent Gaussian, same mean and variance as $x_1$.\vspace*{-1pt}
\end{itemize}
If $1 \leq j_1,\dotsc,j_k \leq n$ and $b_0,\dotsc,b_k \in\mc B_n$, then
\[
 \|E_n[b_0x_{j_1}\dotsb x_{j_k}b_k] - E[b_0y_{j_1}\dotsb
y_{j_k}b_k] \| \leq\frac{C_k(G)}{n}\|x_1\|^k\|b_0\|\dotsb\|b_k\|,\vadjust{\goodbreak}
\]
where $C_k(G)$ is a universal constant which depends only on $k$ and
the easy quantum group $G$.
\end{theorem}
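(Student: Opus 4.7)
The plan is to expand $E_n[b_0 x_{j_1}\cdots x_{j_k}b_k]$ via a single application of the Weingarten formula, use Proposition \ref{cumulants} to rewrite the summands, extract the leading term in $n$ via Proposition \ref{West}, and identify it with $E[b_0 y_{j_1}\cdots y_{j_k}b_k]$ by M\"obius inversion.

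Since the coaction $\widetilde\alpha_n$ of Proposition \ref{coaction} fixes $b_0,b_k \in \mc B_n$, integrating against the Haar state and applying Theorem \ref{weingarten} yields
\begin{equation*}
E_n[b_0 x_{j_1}\cdots x_{j_k}b_k] = \sum_{\substack{\pi,\sigma \in D(k)\\ \sigma \leq \ker\mathbf j}} W_{kn}(\pi,\sigma) \sum_{\substack{1 \leq i_1,\dotsc,i_k \leq n\\ \pi \leq \ker\mathbf i}} b_0 x_{i_1}\cdots x_{i_k}b_k,
\end{equation*}
where $D$ is the partition family of $G$. Applying Proposition \ref{cumulants} with the inner $b_l$'s set equal to $1$, the inner sum equals $n^{|\pi|}\, E_n^{(\pi)}[b_0 x_1, x_1,\dotsc,x_1 b_k]$.

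Next I would invoke Proposition \ref{West}: for $\pi \leq \sigma$, $n^{|\pi|}W_{kn}(\pi,\sigma) = \mu_{D(k)}(\pi,\sigma) + O(n^{-1})$; for $\pi \not\leq \sigma$, the strict inequality $\pi\vee\sigma > \sigma$ forces $|\pi\vee\sigma| < |\sigma|$, so $n^{|\pi|}W_{kn}(\pi,\sigma) = O(n^{|\pi\vee\sigma|-|\sigma|}) = O(n^{-1})$. The explicit averaging formula in Proposition \ref{cumulants} yields the crude bound $\|E_n^{(\pi)}[b_0 x_1,\dotsc,x_1 b_k]\| \leq \|x_1\|^k \|b_0\| \|b_k\|$, so every subleading contribution collects into a single remainder $R_n$ with $\|R_n\| \leq C_k(G) n^{-1} \|x_1\|^k \|b_0\| \|b_k\|$. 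Using Proposition \ref{moebius} to replace $\mu_{D(k)}$ by the M\"obius function on the ambient lattice ($NC(k)$, $E_h(k)$, or $P(k)$, as appropriate), M\"obius inversion converts the leading term into
\begin{equation*}
\sum_{\substack{\sigma \in D(k)\\ \sigma \leq \ker\mathbf j}} \xi_{E_n}^{(\sigma)}[b_0 x_1, x_1,\dotsc,x_1 b_k],
\end{equation*}
where $\xi$ denotes free, half-liberated, or classical cumulants in the three respective cases.

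Finally I would recognize this leading term as $E[b_0 y_{j_1}\cdots y_{j_k}b_k]$. The model family $(y_i)$ is constructed to be free/half/classically independent over $\mc B_n$ and identically distributed, with the same single-variable distribution as $x_1$. By Theorems \ref{cumulantcharacterization} and \ref{halfcumulantcharacterization}, the only nonvanishing $\xi_E$-cumulants of $(y_i)$ are those indexed by partitions in $D(k)$; the iid assumption forces $\sigma \leq \ker\mathbf j$, and the value then agrees with $\xi_{E_n}^{(\sigma)}[b_0 x_1,\dotsc,x_1 b_k]$ by matching of single-variable distributions. The moment-cumulant formula for $(y_i)$ thus reproduces exactly the displayed leading term, which yields the claimed estimate with $C_k(G)$ depending on $|D(k)|^2$ and on the implicit constants from Proposition \ref{West}. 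The main subtlety lies in the half-liberated case, where both Proposition \ref{cumulants} and the multiplicative definition of $\gamma_E^{(\pi)}$ require $\mc B_n$ to be central in $M_n$; the lemma preceding Proposition \ref{cumulants} supplies precisely this fact, allowing the same argument to proceed uniformly across all ten cases.
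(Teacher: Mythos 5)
Your proposal is correct and follows essentially the same route as the paper: Weingarten expansion of $E_n$, Proposition \ref{cumulants} to express the averaging sums as expectation functionals, Proposition \ref{West} for the leading-order behavior of $W_{kn}$, and Proposition \ref{moebius} plus M\"obius inversion to recognize the leading term as the cumulant expansion matching $E[b_0y_{j_1}\dotsb y_{j_k}b_k]$. The one point you glossed over is that the M\"obius inversion formula for $\xi_{E_n}^{(\sigma)}$ sums over all $\pi \leq \sigma$ in the ambient lattice ($NC$, $E_h$, or $P$), not only $\pi \in D(k)$; the paper explicitly notes that for $D = NC_2, NC_h, P_2, P_h$ the extra terms $\pi \notin D(k)$ have a block of odd size and hence $E_n^{(\pi)}$ vanishes because $x_1$ has an even $E_n$-distribution, which is needed to close the identification.
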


\begin{pf}
First, we note that it suffices to prove the statement for $n$
sufficiently large, in particular we will assume throughout that $n$ is
sufficiently large for the Gram matrix $G_{kn}$ to be invertible.

Let $1 \leq j_1,\dotsc,j_k \leq n$ and $b_0,\dotsc,b_k \in\mc B_n$.
We have
\begin{eqnarray*}
E_n[b_0x_{j_1}\dotsb x_{j_k}b_k] &=& \sum_{1 \leq i_1,\dotsc,i_k \leq
n} b_0x_{i_1}\dotsb x_{i_k}b_k \int u_{i_1j_1}\dotsb u_{i_kj_k}\\
&=& \sum_{1 \leq i_1,\dotsc,i_k \leq n} b_0x_{i_1}\dotsb x_{i_k}b_k
\mathop{\mathop{\mathop{\sum}_{\pi,\sigma\in D(k)}}_{ \pi\leq\ker\mathbf
i}}_{
\sigma\leq\ker\mathbf j} W_{kn}(\pi,\sigma)\\
&=& \mathop{\mathop{\sum}_{\sigma\in D(k)}}_{ \sigma\leq\ker\mathbf j}
\sum_{\pi\in D(k)} W_{kn}(\pi,\sigma) \mathop{\mathop{\sum}_{1 \leq
i_1,\dotsc,i_k \leq n}}_{ \pi\leq\ker\mathbf i} b_0x_{i_1}\dotsb x_{i_k}b_k.
\end{eqnarray*}
On the other hand, it follows from the assumptions on $(y_1,\dotsc
,y_n)$ and the various moment-cumulant formulae that
\[
E[b_0y_{j_1}\dotsb y_{j_k}b_k] = \mathop{\mathop{\sum}_{\sigma\in D(k)}}_{
\sigma\leq\ker\mathbf j} \xi_{E_n}^{(\sigma)}[b_0x_1b_1,\dotsc,x_1b_k],
\]
where $\xi$ denotes the relevant free, classical or half-liberated
cumulants. The right-hand side can be expanded, via M\"{o}bius
inversion, in terms of expectation functionals $E_n^{(\pi
)}[b_0x_1b_1,\dotsc,x_1b_k]$ where $\pi$ is a partition in $\mathit{NC},E_h,P$
according to cases (1), (2), (3), and $\pi\leq\sigma$ for some
$\sigma\in D(k)$. Now if $\pi\notin D(k)$ then we claim that this
expectation functional is zero. Indeed this is only possible if $D =
\mathit{NC}_2,\mathit{NC}_h,P_2,P_h$ and $\pi$ has a block with an odd number of legs.
But it is easy to see that in these cases $x_1$ has an even
distribution with respect to $E_n$, and therefore $E_n^{(\pi
)}[b_0x_1b_1,\dotsc,x_1b_k] = 0$ as claimed. This observation,
together with Proposition~\ref{moebius}, allows to to rewrite the
above equation as
\[
E[b_0y_{j_1}\dotsb y_{j_k}b_k] = \mathop{\mathop{\sum}_{\sigma\in D(k)}}_{
\sigma\leq\ker\mathbf j} \mathop{\mathop{\sum}_{\pi\in D(k)}}_{ \pi\leq
\sigma} \mu_{D(k)}(\pi,\sigma)E_n^{(\pi)}[b_0x_1b_1,\dotsc,x_1b_k].
\]

Applying Lemma~\ref{cumulants}, we have
\[
E[b_0y_{j_1}\dotsb y_{j_k}b_k] = \mathop{\mathop{\sum}_{\sigma\in D(k)}}_{
\sigma\leq\ker\mathbf j} \mathop{\mathop{\sum}_{\pi\in D(k)}}_{ \pi\leq
\sigma} \mu_{D(k)}(\pi,\sigma) n^{-|\pi|}\mathop{\mathop{\sum}_{1 \leq
i_1,\dotsc,i_k \leq n}}_{ \pi\leq\ker\mathbf i} b_0x_{i_1}\dotsb x_{i_k}b_k.
\]
Comparing these two equations, we find that
\begin{eqnarray*}
&&E_n[b_0x_{j_1}\dotsb x_{j_k}b_k] - E[b_0y_{j_1}\dotsb y_{j_k}b_k] \\
&& \qquad = \mathop{\mathop{\sum}_{\sigma\in D(k)}}_{ \sigma\leq\ker\mathbf j}
\sum_{\pi\in D(k)}  \bigl(W_{kn}(\pi,\sigma) - \mu
_{D(k)}(\pi,\sigma)n^{-|\pi|} \bigr)\mathop{\mathop{\sum}_{1 \leq
i_1,\dotsc,i_k \leq n}}_{ \pi\leq\ker\mathbf i} b_0x_{i_1}\dotsb x_{i_k}b_k.
\end{eqnarray*}
Now since $x_1,\dotsc,x_n$ are identically distributed with respect to
the faithful state $\varphi$, it follows that these variables have the
same norm. Therefore,
\[
 \biggl\| \mathop{\mathop{\sum}_{1 \leq i_1,\dotsc,i_k \leq n}}_{ \pi\leq
\ker\mathbf i} b_0x_{i_1}\dotsb x_{i_k}b_k  \biggr\| \leq n^{|\pi|}\|
x_1\|^k\|b_0\|\dotsb\|b_k\|
\]
for any $\pi\in D(k)$. Combining this with former equation, we have
\begin{eqnarray*}
 &&\| E_n[b_0x_{j_1}\dotsb x_{j_k}b_k] - E[b_0y_{j_1}\dotsb
y_{j_k}b_k] \| \\[-2pt]
&& \qquad \leq\mathop{\mathop{\sum}_{\sigma\in D(k)}}_{ \sigma\leq\ker\mathbf j}
\sum_{{\pi\in D(k)}}  \bigl|W_{kn}(\pi,\sigma)n^{|\pi|}
- \mu_{D(k)}(\pi,\sigma)\bigr  | \|x_1\|^k\|b_0\|\dotsb\|b_k\|.
\end{eqnarray*}
Setting
\[
C_k(G) = \sup_{n \in\N}  n \cdot
\sum_{\sigma,\pi\in D(k)}\bigl |W_{kn}(\pi,\sigma
)n^{|\pi|} - \mu_{D(k)}(\pi,\sigma)\bigr|,
\]
which is finite by Proposition~\ref{West}, completes the proof.\vspace*{-2pt}
\end{pf}

\section{Infinite quantum invariant sequences}\label{sec:definetti}

In this section, we will prove Theorem~\ref{definetti}. Throughout
this section, we will assume that $G$ is one of the easy quantum groups
$O,S,H,B,O^*,H^*,O^+,S^+,H^+$ or $B^+$. We will make use of the
inclusions $G_n \hookrightarrow G_{m}$ for $n < m$, which correspond to
the Hopf algebra morphisms $\omega_{n,m}\dvtx C(G_m) \to C(G_n)$ determined by
\[
\omega_{n,m}(u_{ij}) =
\cases{\displaystyle u_{ij},  &\quad $1 \leq i, j \leq n$,\cr\displaystyle \delta_{ij}1_{C(G_n)},  &\quad
$\max\{i,j\} > n$.
}
\]
The existence of $\omega_{n,m}$ may be verified in each case by using
the universal relations of $C(G_n)$.

We begin by extending the notion of $G_n$-invariance to infinite sequences.\vspace*{-2pt}

\begin{definition}
Let $(x_i)_{i \in\N}$ be a sequence in a noncommutative probability
space $(\mc A, \varphi)$. We say that the joint distribution of
$(x_i)_{i \in\N}$ is \textit{invariant under $G$}, or that the
sequence is \textit{$G$-invariant}, if $(x_1,\dotsc,x_n)$ is
$G_n$-invariant for each $n \in\N$.
\end{definition}

This means that the joint distribution functional of $(x_1,\dotsc
,x_n)$ is invariant under the action $\alpha_n\dvtx \ms P_n \to\ms P_n
\otimes C(G_n)$ for each $n \in\N$. It will be convenient to extend
these actions\vadjust{\goodbreak} to $\ms P_\infty= \C\langle t_i\dvtx  i \in\N\rangle$, by
defining $\beta_n\dvtx  \ms P_\infty\to\ms P_\infty\otimes C(G_n)$ to be
the unique unital homomorphism such that
\[
\beta_n(t_j) =
\cases{\displaystyle
\sum_{i=1}^n t_i \otimes u_{ij},  &\quad $1 \leq j \leq n$,\cr\displaystyle
t_j
\otimes1_{C(G_n)},  &\quad $j > n$.
}
\]
It is clear that $\beta_n$ is an action of $G_n$, moreover we have the
relations
\[
(\mathrm{id} \otimes\omega_{n,m}) \circ\beta_{m} = \beta_n
\]
and
\[
(\iota_n \otimes\mathrm{id}) \circ\alpha_n = \beta_n \circ\iota_n,
\]
where $\iota_n\dvtx  \ms P_n \to\ms P_\infty$ is the natural inclusion.
Using these compatibilities, it is not hard to see that a sequence
$(x_i)_{i \in\N}$ is $G$-invariant if and only if the joint
distribution functional $\varphi_x\dvtx \ms P_\infty\to\C$ is invariant
under $\beta_n$ for each $n \in\N$.\looseness=-1

Throughout the rest of the section, $(M,\varphi)$ will be a
W$^*$-probability space and $(x_i)_{i \in\N}$ a sequence of
self-adjoint random variables in $(M,\varphi)$. We will assume that
$M$ is generated as a von Neumann algebra by $\{x_i\dvtx i \in\N\}$.
$L^2(M,\varphi)$ will denote the GNS Hilbert space, with inner product
$\langle m_1, m_2 \rangle= \varphi(m_1^*m_2)$. The strong topology on
$M$ will be taken with respect to the faithful representation on
$L^2(M,\varphi)$. We set
\[
\mc B_n = \mathrm{W}^* \bigl(\{p(x)\dvtx  p \in\ms P_\infty^{\beta_n}\}
 \bigr),
\]
where $\ms P_\infty^{\beta_n}$ is the fixed point algebra of the
action $\beta_n$. Since
\[
(\mathrm{id} \otimes\omega_{n,n+1}) \circ\beta_{n+1} = \beta_n,
\]
it follows that $\mc B_{n+1} \subset\mc B_n$ for all $n \geq1$. We
then define the \textit{$G$-invariant subalgebra} by
\[
\mc B = \bigcap_{n \geq1} \mc B_n.
\]

\begin{remark}
If $(x_i)_{i \in\N}$ is $G$-invariant, then as in Proposition \ref
{coaction}, for each $n \in\N$ there is a right coaction $\widetilde
\beta_n\dvtx M \to M \otimes L^\infty(G_n)$ determined by
\[
\widetilde\beta_n(p(x)) = (\mathrm{ev}_x \otimes\pi_n)\beta_n(p)
\]
for $p \in\ms P_\infty$, and moreover the fixed point algebra of
$\widetilde\beta_n$ is $\mc B_n$. For each \mbox{$n \in\N$}, there is then
a $\varphi$-preserving conditional expectation $E_n\dvtx M \to\mc B_n$
given by integrating the action $\widetilde\beta_n$, that is,
\[
E_n[m] = \biggl(\mathrm{id} \otimes\int\biggr) \widetilde\beta_n(m)
\]
for $m \in M$. By taking the limit as $n\to\infty$, we obtain a
$\varphi$-preserving conditional expectation onto the $G$-invariant subalgebra.
\end{remark}

\begin{proposition}\label{revmart}
Suppose that $(x_i)_{i \in\N}$ is $G$-invariant. Then:\vadjust{\goodbreak}
\begin{longlist}[(3)]
\item[(1)] For any $m \in M$, the sequence $E_n[m]$ converges in $|\cdot|_2$
and the strong topology to a limit $E[m]$ in $\mc B$. Moreover, $E$ is
a $\varphi$-preserving conditional expectation of $M$ onto $\mc B$.
\item[(2)] Fix $\pi\in \mathit{NC}(k)$ and $m_1,\dotsc,m_k \in M$, then
\[
E^{(\pi)}[m_1 \otimes\dotsb\otimes m_k] = \lim_{n \to\infty}
E_{n}^{(\pi)}[m_1 \otimes\dotsb\otimes m_k],\vspace*{-1pt}
\]
with convergence in the strong topology.\vspace*{-3pt}
\end{longlist}
\end{proposition}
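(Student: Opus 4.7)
The plan is to treat part (1) as a noncommutative reverse martingale convergence theorem on $L^2(M,\varphi)$, and then derive part (2) by induction on the block structure of $\pi$.

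For part (1), each $\varphi$-preserving conditional expectation $E_n$ extends to an orthogonal projection $e_n$ on $L^2(M,\varphi)$ with range $L^2(\mc B_n)$, and since $\mc B_{n+1} \subset \mc B_n$ the $e_n$ form a decreasing sequence of projections.  Standard Hilbert space theory gives strong convergence $e_n \to e$ on $L^2(M,\varphi)$, where $e$ projects onto $\bigcap_n L^2(\mc B_n)$.  For $m \in M$ we have $\|E_n[m]\| \leq \|m\|$, and $E_n[m]\Omega = e_n(m\Omega)$ is $L^2$-convergent; a weak-$*$ subnet limit $E[m] \in M$, obtained from Banach--Alaoglu on the norm-ball of radius $\|m\|$, must satisfy $E[m]\Omega = e(m\Omega)$, and this determines $E[m]$ uniquely since $\Omega$ is separating.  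Because $E_k[m] \in \mc B_n$ for all $k \geq n$ and $\mc B_n$ is ultraweakly closed, $E[m]$ lies in $\mc B = \bigcap_n \mc B_n$.  The properties of a $\varphi$-preserving conditional expectation onto $\mc B$ (linearity, $\varphi$-invariance, and the $\mc B$-bimodule identity $E[b_1 m b_2] = b_1 E[m] b_2$ for $b_1, b_2 \in \mc B \subset \mc B_n$) then transfer from the $E_n$ by $|\;|_2$-continuity.

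To upgrade $|\;|_2$-convergence to convergence in the strong operator topology, the plan is to exploit cyclicity of $\Omega$ for the commutant $M'$: for any $m' \in M'$,
\[
\bigl\|(E_n[m]-E[m]) \cdot m'\Omega\bigr\|_{L^2} \;=\; \bigl\|m' \cdot (E_n[m]-E[m])\Omega\bigr\|_{L^2} \;\leq\; \|m'\| \cdot |E_n[m]-E[m]|_2 \longrightarrow 0.
\]
Combined with the uniform norm bound $\|E_n[m]-E[m]\|\leq 2\|m\|$ and density of $M'\Omega$ in $L^2(M,\varphi)$, this yields strong operator convergence on all of $L^2(M,\varphi)$.

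For part (2), I will prove a slightly stronger statement by induction on the number of blocks of $\pi$: whenever $(a_j^{(n)})_n$ are sequences in $M$ with $\sup_n \|a_j^{(n)}\| < \infty$ and $a_j^{(n)} \to a_j$ strongly, one has
\[
E_n^{(\pi)}[a_1^{(n)},\dotsc,a_k^{(n)}] \longrightarrow E^{(\pi)}[a_1,\dotsc,a_k]
\]
strongly.  The base case $\pi = 1_k$ combines joint strong continuity of multiplication on bounded sets with part (1), extended to arguments $b^{(n)} \to b$ strongly (the extension uses $L^2$-contractivity of each $E_n$ to handle the moving argument, and the same commutant-action trick to convert $|\;|_2$-convergence back into strong operator convergence).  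For the inductive step, peel off an interval block $V = \{l+1,\dotsc,l+s\}$ of $\pi$; the recursive definition of $E_n^{(\pi)}$ gives
\[
E_n^{(\pi)}[a_1^{(n)},\dotsc,a_k^{(n)}] \;=\; E_n^{(\pi \setminus V)}\bigl[a_1^{(n)},\dotsc,a_l^{(n)} \cdot E_n[a_{l+1}^{(n)}\cdots a_{l+s}^{(n)}],\dotsc,a_k^{(n)}\bigr],
\]
the inner bracket is uniformly bounded and converges strongly by the base case, and the induction hypothesis applied to $\pi \setminus V$ (which has strictly fewer blocks) closes the argument.  Specializing to constant sequences $a_j^{(n)} = m_j$ yields the proposition.

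The main technical obstacle is the strong-topology upgrade inside part (1).  Because the paper explicitly allows $\varphi$ to be non-tracial, $|\;|_2$-convergence does not directly imply strong operator convergence of bounded sequences via the naive argument $\|(a_n - a)m'\|_2 \leq \|m'\| \cdot |a_n-a|_2$; the commutant-action trick above, which uses only cyclicity/separation of $\Omega$, is the right substitute, and it also underlies the propagation of strong convergence through the nested expressions appearing in part (2).
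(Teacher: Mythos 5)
Your proof is correct. The paper itself does not prove this proposition; it simply cites \cite[Proposition 4.7]{cur2} and remarks that part (1) is a ``simple noncommutative reversed martingale convergence theorem.'' What you have written is a complete, self-contained argument that fills in the details the paper defers, and it is essentially the standard route one would take. A few points worth highlighting: (i) you correctly exploit that each $E_n$ induces the orthogonal projection $e_n$ of $L^2(M,\varphi)$ onto $\overline{\mc B_n\Omega}$, so the decreasing sequence $e_n$ converges strongly; (ii) the weak-$*$ compactness of the ball of radius $\|m\|$ in $M$ together with the fact that $\Omega$ is separating cleanly identifies a canonical limit $E[m]\in M$ with $E[m]\Omega = e(m\Omega)$, and weak-$*$ closedness of each $\mc B_n$ then places $E[m]$ in $\mc B$; (iii) you correctly flag that since $\varphi$ is not assumed tracial, the naive $L^2$-to-strong upgrade fails, and the commutant trick $\|(E_n[m]-E[m])m'\Omega\| = \|m'(E_n[m]-E[m])\Omega\| \le \|m'\|\,|E_n[m]-E[m]|_2$ (using $[M,M']=0$ and cyclicity of $\Omega$ for $M'$, equivalently faithfulness of $\varphi$) is the right substitute. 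For part (2), the strengthened induction hypothesis allowing bounded strongly convergent ``moving arguments'' is exactly what is needed for the interval-peeling recursion defining $E_n^{(\pi)}$ for $\pi\in NC(k)$, and the base case is handled correctly via $L^2$-contractivity of $E_n$ plus the same commutant device. Two minor presentational remarks: your phrasing ``transfer from the $E_n$ by $|\;|_2$-continuity'' is slightly loose for the bimodule identity $E[b_1mb_2]=b_1E[m]b_2$, which actually uses the strong convergence you establish just afterwards (joint strong continuity of multiplication on bounded sets); and it would be worth noting that the interval-peeling step requires $\pi$ noncrossing, which is precisely why the proposition is stated for $\pi\in NC(k)$.
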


\begin{pf}
The proof follows from~\cite{cur2}, Proposition 4.7. Note that (1) is just
a simple noncommutative reversed martingale convergence theorem. More
sophisticated convergence theorems for noncommutative martingales have
been obtained; see, for example,~\cite{gold1,gold2}.\vspace*{-3pt}
\end{pf}

We are now prepared to prove Theorem~\ref{definetti}.\vspace*{-3pt}

\begin{pf*}{Proof of Theorem~\ref{definetti}}
Let $j_1,\dotsc,j_k \in\N$ and $b_0,\dotsc,b_k \in B$. As in the
proof of Theorem~\ref{approx}, we have
\begin{eqnarray*}
&&E[b_0x_{j_1}\dotsb x_{j_k}b_k]\\[-3pt]
 && \qquad = \lim_{n \to\infty}
E_n[b_0x_{j_1}\dotsb x_{j_k}b_k]\\[-3pt]
 && \qquad = \lim_{n \to\infty} \mathop{\mathop{\sum}_{\sigma\in D(k)}}_{ \sigma
\leq\ker\mathbf j} \sum_{\pi\in D(k)} W_{kn}(\pi,\sigma) \mathop{\mathop{\sum}_{1 \leq i_1,\dotsc,i_k \leq n}}_{ \pi\leq\ker\mathbf i}
b_0x_{i_1}\dotsb x_{i_k}b_k\\[-3pt]
 && \qquad = \lim_{n \to\infty} \mathop{\mathop{\sum}_{\sigma\in D(k)}}_{ \sigma
\leq\ker\mathbf j} \mathop{\mathop{\sum}_{\pi\in D(k)}}_{ \pi\leq\sigma
} \mu_{D(k)}(\pi,\sigma) n^{-|\pi|}\mathop{\mathop{\sum}_{1 \leq
i_1,\dotsc,i_k \leq n}}_{ \pi\leq\ker\mathbf i} b_0x_{i_1}\dotsb x_{i_k}b_k.\vspace*{-1pt}
\end{eqnarray*}
By Proposition~\ref{cumulants}, and using the compatibility
\[
(\,\widetilde\iota_n \otimes\mathrm{id}) \circ\widetilde\alpha_n =
\widetilde\beta_n \circ\widetilde\iota_n,\vspace*{-1pt}
\]
where $\widetilde\iota_n\dvtx W^*(x_1,\dotsc,x_n) \to M$ is the obvious
inclusion and $\widetilde\alpha_n$ is as in the previous section, we have
\[
E[b_0x_{j_1}\dotsb x_{j_k}b_k] = \lim_{n \to\infty} \mathop{\mathop{\sum}_{
\sigma\in D(k)}}_{ \sigma\leq\ker\mathbf j} \mathop{\mathop{\sum}_{\pi
\in D(k)}}_{ \pi\leq\sigma} \mu_{D(k)}(\pi,\sigma) E_n^{(\pi
)}[b_0x_1b_1,\dotsc,x_1b_k].\vspace*{-1pt}
\]
By (2) of Proposition~\ref{revmart}, we obtain
\[
E[b_0x_{j_1}\dotsb x_{j_k}b_k] = \mathop{\mathop{\sum}_{\sigma\in D(k)}}_{
\sigma\leq\ker\mathbf j} \mathop{\mathop{\sum}_{\pi\in D(k)}}_{ \pi\leq
\sigma} \mu_{D(k)}(\pi,\sigma) E^{(\pi)}[b_0x_1b_1,\dotsc,x_1b_k].\vspace*{-1pt}
\]
As discussed in the proof of Theorem~\ref{approx}, we can replace the
sum of expectation functionals by cumulants to obtain
\[
E[b_0x_{j_1}\dotsb x_{j_k}b_k] = \mathop{\mathop{\sum}_{\sigma\in D(k)}}_{
\sigma\leq\ker\mathbf j} \xi_E^{(\sigma)}[b_0x_1b_1,\dotsc,x_1b_k],\vspace*{-1pt}\vadjust{\goodbreak}
\]
where $\xi$ denotes the relevant free, half-liberated or classical
cumulants. Since the cumulants are determined by the moment-cumulant
formulae, we find that
\[
\xi_E^{(\sigma)}[b_0x_{j_1}b_1,\dotsc,x_{j_k}b_k] =
\cases{\displaystyle
 \xi_E^{(\sigma)}[b_0x_1b_1,\dotsc,x_1b_k],  &\quad$\sigma\in
D(k)$   and   $\sigma\leq\ker\mathbf j$,\cr\displaystyle 0,  &\quad
otherwise.
}
\]
The result then follows from the characterizations of these joint
distributions in terms of cumulants given in Theorem \ref
{cumulantcharacterization} and Propositions \ref
{halfcumulantcharacterization} and~\ref{halfgausscharacterization}.
\end{pf*}

\begin{remark}
For simplicity, we have restricted to elements of a von Neumann
algebra, that is, bounded random variables, in the statement of Theorem
\ref{definetti}. However, for the easy quantum groups $O,B$ and $O^*$
the result implies that the variables must have unbounded
distributions. In the classical setting, the boundedness assumption can
be easily replaced by the condition that $x_1$ has finite moments of
all orders. The key differences are as follows:

First, in the classical case one can replace the uniform bound in
Theorem~\ref{approx} by the $L^p$ estimate
\[
 |E_n[x_{j_1}\dotsb x_{j_k}] - E[y_{j_1}\dotsb y_{j_k}] |_p
\leq\frac{C_k(G)}{n}|x_1|^k_{pk},
\]
where $|\cdot|_p$ denotes the $L^p$-norm. The proof is identical, except
that one uses H\"{o}lder's identity $|x_{i_1}\dotsb x_{i_k}|_p \leq
|x_1|_{pk}^k$ for any $1 \leq i_1,\dotsc,i_p \leq n$.

Second, Proposition~\ref{revmart} is replaced by a standard $L^p$
reversed martingale convergence theorem (the statement for expectation
functionals requiring another application of H\"{o}lder).

With these technical modifications, the proof of Theorem \ref
{definetti} shows that any infinite $B$ (resp., $O$) invariant sequence
of classical random variables with finite moments of all orders has the
same joint moments with respect to~$\mc B$ as a conditionally i.i.d.
(centered) Gaussian family. But this is sufficient to determine the
joint distribution with respect to $\mc B$, since the Gaussian
distribution is characterized by its moments.

Likewise, the result for $O^*$ still holds if $(x_i)_{i \in\N}$ are
of the form in Example~\ref{halfexample}, where $|\xi_i|$ has finite
moments of all orders. The details are left to the reader.
\end{remark}

\section{Concluding remarks}\label{sec:conclusion}

We have seen in this paper that the ``easiness'' condition from \cite
{bsp} provides a good framework for the study of de Finetti type
theorems for orthogonal quantum groups.

A first natural question is what happens in the unitary case. For the
classical unitary group $U_n$, it is well known that an infinite
sequence of complex-valued random variables is unitarily invariant if
and only if they are conditionally i.i.d. centered complex Gaussians.
For the free unitary group~$U_n^+$ this is considered in~\cite{cur2},
where it is shown that an infinite sequence of noncommutative random
variables is quantum unitarily invariant if and only if they form an
operator-valued free circular family with mean zero and common
variance. However, the study and classification of easy quantum groups
seems to be a quite difficult combinatorial problem in the unitary
case, we refer to the concluding section of~\cite{bsp} for a
discussion here.

In addition to the 14 easy quantum groups discussed in this paper,
there are also two infinite series $H_n^{(s)}$ and $H_n^{[s]}$, $s =
2,3,\dotsc,\infty$, which are related to the complex reflection
groups $H_n^s = \Z_s \wr S_n$. These are described in~\cite{bcs1},
with the conjectural conclusion that the class of easy quantum groups
consists of the 14 examples discussed in this paper, and a
multi-parameter ``hyperoctahedral series'' unifying $H_n^{(s)}$ and
$H_n^{[s]}$. It is a natural question whether there are de Finetti type
results for this series, with corresponding notions of
``independence,'' and we plan to return to this question after
completing the construction.

A third question is whether the approximation result in Theorem \ref
{approx} can be strengthened. The main tool that we have available at
this time, namely the Weingarten formula, is only suitable for
estimates on the joint moments. In~\cite{df1}, Diaconis and Freedman
give refined estimates on the variation norm between the distribution
of the coordinates $(u_{11},\dotsc,u_{1k})$ on $S_n$ (resp., $O_n$) and
an independent Bernoulli (resp., Gaussian) distribution. This is used to
prove finite de Finetti type results, where the approximations hold in
variation norm. It is known from~\cite{bc1,bc2} that the coordinates
$(u_{11},\dotsc,u_{1k})$ on~$S_n^+$ and $O_n^+$ converge in moments to
freely independent Bernoulli and semicircular distributions, and it is
a natural question whether these converge in a stronger sense. For
$k=1$, it is known from~\cite{bcz} that the distribution of
$n^{1/2}u_{11}$ in $C(O_n^+)$ ``superconverges'' (in the sense of \cite
{bev}) to the semicircle law, but nothing is currently known for $k > 1$.

Another question is whether the results of Aldous~\cite{ald} for
invariant arrays of random variables have suitable extensions to easy
quantum groups. We will consider this problem first for free quantum
groups in a forthcoming paper~\cite{cs2}.

Another basic symmetry for a sequence of classical random variables is
\textit{spreadability}, that is, invariance under taking subsequences.
Ryll-Nardzewski proved in~\cite{rn} that de Finetti's theorem in fact
holds under this apparently weaker condition. A free analogue of this
condition, and of Ryll-Nardzewski's theorem, has been obtained in~\cite{cur3}.

Finally, there is the general question of applying our ``$S_n,O_n$
philosophy'' to other situations. In~\cite{bcs2}, we have developed a
global approach, using the ``easiness'' formalism, to the fundamental
stochastic eigenvalue computations of Diaconis and Shahshahani~\cite{dsh}.

%

\printaddresses

\end{document}